\documentclass[12pt]{article}

% Packages for formatting and math
\usepackage{graphicx}    % For including images
\usepackage{amsmath} 
% For mathematical equations
\usepackage{xcolor}
\usepackage{amssymb, amsthm} 
\usepackage{booktabs}% Additional math symbols
\usepackage{hyperref}    % For hyperlinks
\usepackage{geometry}    % Adjust margins
\usepackage{caption}     % For captions in figures and tables
\usepackage{cite}
\usepackage{algorithm}
\usepackage{algorithmicx}
\usepackage{algpseudocode}
\geometry{a4paper, margin=1in}
\usepackage{pgfplots}
\usepackage{bm}
\usepackage{float}
\usetikzlibrary{patterns,positioning}
\usepackage{tikz}
\usetikzlibrary{decorations.markings}
\usetikzlibrary{matrix}
\usepackage{pgfplots}
\usetikzlibrary{shapes, arrows.meta, patterns}
\usepackage{mathrsfs}

\usepackage{subcaption}

\newcommand{\bfa}[1]{\boldsymbol{#1}} 			
			
\newcommand{\bfeps}{\boldsymbol{\epsilon}}

\newcommand{\Sym}{\text{Sym}}   			%
\newcommand{\curl}{\text{curl}}   				%
\newcommand{\tr}{\text{tr}}       				%

\newcommand{\jump}[1]{[\![#1]\!]}

\DeclareMathAlphabet{\mathpzc}{OT1}{pzc}{m}{it}

%---------------------------------------------------------

\newcommand{\bfu}{\boldsymbol{u}}

\newcommand{\bfF}{\boldsymbol{F}}	
\newcommand{\bfE}{\boldsymbol{E}}	
\newcommand{\bfC}{\boldsymbol{C}}
\newcommand{\bfB}{\boldsymbol{B}}	
\newcommand{\bfx}{\boldsymbol{x}}	
\newcommand{\bfX}{\boldsymbol{X}}	
	
\newcommand{\bfT}{\boldsymbol{T}}		
\newcommand{\bfI}{\boldsymbol{I}}	 % for second order Identity tensor

 % Or \hat{K}, \hat{T}

\usepackage{authblk}

\usepackage{etoolbox}

% --- CUSTOM COMMANDS ---
% Define bold symbols for consistency

\newcommand\norm[1]{\left\lVert#1\right\rVert}

% Define H for Sobolev spaces if preferred

% Define L for Lebesgue spaces

\newtheorem{Theorem}{Theorem}[section]
\newtheorem{remark}{Remark}
\newtheorem*{cf}{Strong formulation}
\newtheorem*{cwf}{Continuous weak formulation}

\title{ $hp$-adaptive finite element simulation of a static anti-plane shear crack in a nonlinear strain-limiting elastic solid }

\author[1,*]{S. M. Mallikarjunaiah}
\author[2]{Pavithra Venkatachalapthy}

\affil[1]{Department of Mathematics \& Statistics, Texas A\&M University-Corpus Christi, TX- 78412, USA}
\affil[2]{Department of Mathematics \& Statistics, Texas Tech University,  Lubbock, TX, 79409-1042, USA}
\affil[*]{Corresponding author}
\affil[ ]{\textit{E-mail addresses:} \texttt{M.Muddamallappa@tamucc.edu} (S. M. Mallikarjunaiah), \texttt{pavvenka@ttu.edu} (Pavithra V.)}

\date{}

\begin{document}

\maketitle  

\begin{abstract}
An $hp$-adaptive continuous Galerkin finite element method is developed to analyze a static anti-plane shear crack embedded in a nonlinear, strain-limiting elastic body. The geometrically linear material is described by a constitutive law relating stress and strain that is algebraically nonlinear. Such a formulation is advantageous as it regularizes the stress and strain fields in the neighborhood of a crack-tip, thereby circumventing the non-physical strain singularities inherent to linear elastic fracture mechanics. In this investigation, the constitutive relation utilized is \textit{uniformly bounded}, \textit{monotone}, \textit{coercive}, and \textit{Lipschitz continuous}, ensuring the well-posedness of the mathematical model. The governing equation, derived from the balance of linear momentum coupled with the nonlinear constitutive relationship, is formulated as a second-order quasi-linear elliptic partial differential equation. For a body with an edge crack, this governing equation is augmented with a classical traction-free boundary condition on the crack faces. An $hp$-adaptive finite element scheme is proposed for the numerical approximation of the resulting boundary value problem. The adaptive strategy is driven by a dual-component error estimation scheme: mesh refinement ($h$-adaptivity) is guided by a residual-based a posteriori error indicator of the \textit{Kelly type}, while the local polynomial degree ($p$-adaptivity) is adjusted based on an estimator of the local solution regularity. The performance, accuracy, and convergence characteristics of the proposed method are demonstrated through numerical experiments. The structure of the regularized crack-tip fields is examined for various modeling parameters. Furthermore, the presented framework establishes a robust foundation for extension to more complex and computationally demanding problems, including quasi-static and dynamic crack propagation in brittle materials.

\end{abstract}

\vspace{.1in}

\textbf{Key words.} Strain-limiting elastic body; anti-plane shear loading; $hp$ adaptive finite element method; Kelly error estimator; local smoothness

\section{Introduction}

The Finite Element Method (FEM) has long been a cornerstone of computational mechanics, providing a powerful framework for solving complex boundary value problems (BVPs) across various engineering and scientific disciplines \cite{zienkiewicz2005finite}. Traditional FEM approaches typically rely on either $h$-refinement, which involves subdividing the mesh into smaller elements, or $p$-refinement, which increases the polynomial order of the basis functions within elements. While effective for many problems, pure $h$-refinement often exhibits slow algebraic convergence rates, especially for issues with localized features or singularities, necessitating a huge number of elements to achieve the desired accuracy. Conversely, pure $p$-refinement, though offering faster convergence for smooth solutions, can be less adaptable to complex geometries or regions with abrupt changes in solution gradients. To overcome these limitations, the $hp$-adaptive finite element method emerged as a sophisticated and highly efficient alternative \cite{babuska1981p,babuvska1986basic}. The fundamental principle of $hp$-FEM is to strategically combine both $h$-refinement and $p$-refinement within a single adaptive scheme. This dual-adaptive capability allows the method to optimally distribute computational resources, refining the mesh in regions where the solution exhibits high gradients or singularities (e.g., crack-tips) while simultaneously increasing the polynomial degree in areas where the solution is smoother, thereby capturing intricate details with remarkable precision.

A primary advantage of $hp$-FEM is its capacity to deliver exponential rates of convergence for a wide class of problems, provided the solution possesses a certain level of analytical regularity \cite{schwab1998p,babuvska1990p,szabo1986mesh}. This superior convergence behavior translates directly into significant computational savings: to achieve a prescribed level of accuracy, an $hp$-adaptive approach typically requires far fewer degrees of freedom compared to purely $h$- or $p$-refinement strategies. This efficiency is particularly valuable for computationally intensive nonlinear problems, where each iteration incurs a high cost. Moreover, $hp$-FEM provides a robust framework for error estimation and control, enabling the adaptive algorithm to guide the refinement process towards an optimal mesh and polynomial distribution. 

For problems involving stress concentrators or singularities, such as cracks in elastic solids, $hp$-FEM is particularly well-suited \cite{kato1993adaptive}. Classical linear elastic fracture mechanics predicts singular stress and strain fields at the crack-tip, which pose significant challenges for numerical methods. By employing h-refinement near the singularity to capture the steep gradients and p-refinement away from it to accurately represent the smoother global solution, hp-FEM can effectively resolve these challenging features \cite{andersson1992reliable}. This adaptive capability ensures that the highly localized and rapidly changing fields around the crack-tip are accurately approximated, which is crucial for reliable predictions of fracture behavior. 

The accurate characterization of stress and strain fields in the vicinity of geometric discontinuities, such as notches, slits, holes, and material defects, constitutes a foundational challenge in both practical engineering design and theoretical solid mechanics. Historically, the analysis of these critical stress concentrations has predominantly relied upon the tenets of linearized elasticity theory, as articulated in seminal works \cite{Inglis1913,lin1980singular,love2013treatise,murakami1993stress}. A well-documented limitation of this classical framework is its inherent prediction of physically unrealistic, unbounded strain singularities at the tips of such discontinuities. This pathological behavior stems directly from the first-order linear approximation of finite deformation inherent in the theory. The imperative to develop constitutive representations that yield physically congruent material responses has thus driven extensive research into more sophisticated models, including those presented in \cite{gurtin1975,sendova2010,MalliPhD2015,ferguson2015,zemlyanova2012,WaltonMalli2016,rajagopal2011modeling,gou2015modeling}, alongside the application of advanced numerical techniques like various collocation methods \cite{yadav2024gegenbauer,gazonas2023numerical}. Nevertheless, a persistent hurdle remains in balancing enhanced model fidelity with computational tractability and the crucial aspect of experimental validation \cite{broberg1999}. Many proposed model enhancements frequently introduce substantial computational overheads or present formidable challenges in empirical verification. Furthermore, the application of linear elastic fracture mechanics (LEFM) to the modeling of crack initiation and propagation is subject to intrinsic limitations that demand careful consideration. Beyond the established strain singularity at the crack-tip, these restrictions encompass the prediction of a physically implausible blunt crack-opening profile and the potential for crack-face interpenetration, particularly problematic in bimaterial interfaces. It is noteworthy that the issue of crack-tip singularity often persists even within certain nonlinear elasticity frameworks, as demonstrated by examples such as \cite{knowles1983large} and the bell constraint model in \cite{tarantino1997nonlinear}. Consequently, a salient question arises concerning the capacity of specific algebraic nonlinear models to effectively regulate the crack-tip strain singularity, even when singular stresses are still present.

A significant advancement in the theory of elasticity, extending beyond the conventional Cauchy and Green formulations, has been pioneered by Rajagopal and his collaborators through a series of foundational publications \cite{rajagopal2003implicit,rajagopal2007elasticity,rajagopal2007response,rajagopal2009class,rajagopal2011non,rajagopal2011conspectus,rajagopal2016novel,rajagopal2018note}. This comprehensive body of work, often referred to as \textit{Rajagopal's theory of elasticity}, introduces implicit constitutive models that are firmly rooted in a rigorous thermodynamic framework. Within this theory, the mechanical response of an elastic body---defined as a material incapable of energy dissipation---is precisely characterized by implicit relations between the Cauchy stress and deformation gradient tensors \cite{bustamante2018nonlinear}. A particularly compelling attribute of Rajagopal's methodology is its potential to yield a hierarchical structure of  ``explicit" nonlinear relationships, wherein linearized strain can be expressed as a nonlinear function of Cauchy stress. Crucially, a distinct subclass of these implicit models facilitates the representation of linearized strain as a uniformly bounded function throughout the entire material domain, even under conditions of substantial applied stress. This inherent 'limiting strain' property renders these models exceptionally well-suited for the investigation of crack and fracture behavior in brittle materials \cite{rajagopal2011modeling,gou2015modeling,mallikarjunaiah2015direct,MalliPhD2015}, and holds considerable promise for extension to quasi-static and dynamic crack evolution analyses. Utilizing these strain-limiting models, numerous studies have re-examined and provided new insights into classical elasticity problems \cite{kulvait2013,rajagopal2018bodies,bulivcek2014elastic,erbay2015traveling,itou2018states,zhu2016nonlinear,csengul2018viscoelasticity,itou2017contacting,yoon2022CNSNS,yoon2022MAM}. In essence, strain-limiting constitutive models offer a remarkably versatile framework for elucidating the mechanical behavior of a diverse spectrum of materials, with particular advantages in the comprehensive analysis of crack and fracture phenomena. Recent investigations \cite{lee2022finite,yoon2021quasi} have further demonstrated that the formulation of quasi-static crack evolution problems within this strain-limiting theoretical framework results in a rich array of complex crack patterns, including observations of increased crack-tip propagation velocities.

This investigation focuses on the behavior of a static crack subjected to anti-plane shear loading, embedded within an elastic solid governed by a nonlinear strain-limiting modeling framework. This constitutive formulation isn't just a mathematical construct; it's derived directly from \textit{Rajagopal's theory of elasticity}, ensuring a physically consistent framework that accurately reflects the real-world behavior of materials. This approach allows us to define the response of elastic solids in a far more general setting than traditional models. This means it can account for complexities like large deformations, anisotropic behavior, or material nonlinearities often overlooked by simpler theories, providing a more robust and realistic description of material mechanics. The combination of the linear momentum balance equation with this algebraically nonlinear constitutive law results in a quasi-linear elliptic BVP. Given the inherent intractability of obtaining analytical solutions for such nonlinear partial differential equations, a robust numerical methodology is indispensable. To accurately approximate the solution, we employ the $hp$-adaptive finite element method.  The method's ability to achieve exponential rates of convergence makes it a powerful tool for resolving the intricate details of the stress, strain, and strain-energy density fields in the vicinity of the crack. The inherent nonlinearities of the system are effectively managed through the implementation of Newton's iterative algorithm. The convergence of the numerical solution is rigorously demonstrated by the progressive reduction of the residual at each iteration, ensuring the reliability of our results. Our analysis reveals several key insights into the crack-tip fields under anti-plane shear: specifically, stress concentration is observed to intensify with increasing values of the parameter $\alpha$, while an inverse trend is noted for the parameter $\beta$. Furthermore, the crack-tip strain values exhibit a growth pattern distinct from that predicted by linear elastic models, and the strain-energy density consistently remains highest near the crack-tip along the reference line. This foundational study provides critical insights into the behavior of cracks in nonlinear strain-limiting materials and lays the groundwork for future extensions, including the examination of dynamic crack propagation or more complex loading conditions.

The paper is organized as follows: Section~\ref{intro} introduces Rajagopal's implicit theories of elasticity, which describe the response of elastic materials. The anti-plane shear problem, the focus of this investigation, is also detailed in the same section. The boundary value problem and the existence of its solution are explained in Section~\ref{sec:bvp_existence}. The hp-adaptive finite element method used to simulate the static anti-plane crack is described in detail in Section~\ref{sec:numerical_method}. The results and discussions are outlined in Section~\ref{rd}, and the paper concludes with conclusions and future work in Section~\ref{conclusions}.

\section{Introduction to Rajagopal's theory of elasticity}\label{intro}

In solid mechanics, the mechanical behavior of the elastic material is modeled by the following system of governing equations:

\begin{itemize}
\item An equilibrium equation derived from the balance of linear and angular momentum.
\item A strain-displacement relation based on the infinitesimal strain assumption.
\item A strain compatibility condition to ensure a continuous displacement field.
\item A  linear or nonlinear constitutive law that explicitly or implicitly relates stress and strain.
\end{itemize}

Classical Cauchy elasticity assumes an explicit constitutive relationship where stress is a direct function of a strain measure. To overcome the limitations inherent in such models, particularly for materials exhibiting complex nonlinear responses, Rajagopal introduced a more general implicit theory \cite{rajagopal2003implicit,rajagopal2007elasticity,rajagopal2011non,rajagopal2011conspectus,rajagopal2014nonlinear,rajagopal2007response}. In this framework, stress and a suitable strain measure are related through an implicit algebraic equation. This approach is particularly adept at describing phenomena such as strain-limiting behavior, where strain is bounded regardless of the applied stress.

This investigation leverages a specific class of these implicit models to study the behavior of an isotropic elastic body containing geometric discontinuities. Our ultimate aim is to formulate and implement a convergent numerical method capable of solving the BVP associated with this advanced constitutive model.

\subsection{Kinematic preliminaries}
Let $\mathcal{B}$ be an elastic body occupying a two-dimensional domain, with $\bfX$ and $\bfx$ denoting points in the reference and current configurations. The fundamental kinematic quantities include the displacement $\bfu$, the deformation gradient $\bfF$, the right and left Cauchy-Green stretch tensors ($\bfC$, $\bfB$), and the Lagrange strain $\bfE$:
\begin{subequations}
\begin{align}
    \bfF &:= \bfI + \nabla \bfu, \\
    \bfC &:= \bfF^{\mathrm{T}}\bfF, \\
    \bfB &:= \bfF\bfF^{\mathrm{T}}, \\
    \bfE &:= \frac{1}{2} \left( \bfC - \bfI \right).
\end{align}
\end{subequations}
This work operates within the realm of infinitesimal strain theory, which assumes the displacement gradient is small, i.e.,
\begin{equation}
 \max_{\bfX \in \mathcal{B}} \| \nabla_X \bfu \| \ll 1.
 \end{equation}
 The notation $\| \cdot \|$ refers to the vector norm when applied to a vector and the Frobenius norm when applied to a tensor. This crucial assumption allows for the linearization of the above quantities, leading to the relations $\bfB \approx \bfI + 2 \bfeps$ and $\bfE \approx \bfeps$, where $\bfeps$ is the symmetric infinitesimal strain tensor:
\begin{equation}
    \bfeps := \frac{1}{2} \left( \nabla \bfu + \nabla \bfu^{\mathrm{T}}\right).
\end{equation}

\subsection{Constitutive model formulation}
Rajagopal's general implicit framework posits a relation of the form $\mathcal{F}(\bfT, \bfB) = \bfa{0}$, where $\bfT$ is the symmetric Cauchy stress tensor. For this relation to be objective (frame-indifferent), $\mathcal{F}$ must be an isotropic tensor function. We focus on a notable subclass of these models where the stretch is an explicit function of the stress:
\begin{equation}\label{eq:SL_model}
    \bfB := \mathcal{G}(\bfT).
\end{equation}
If the function $\mathcal{G}$ is bounded such that $\sup_{\bfT \in \Sym(2)} \| \mathcal{G}(\bfT) \| \leq M$ for some constant $M>0$, the model is termed \textit{strain-limiting} \cite{MalliPhD2015,mallikarjunaiah2015direct}.

By applying the infinitesimal strain approximation $\bfB \approx \bfI + 2 \bfeps$ to \eqref{eq:SL_model}, we arrive at a constitutive law relating the small strain tensor $\bfeps$ to the Cauchy stress $\bfT$. For an isotropic material, this relationship can be expressed using a representation theorem involving the invariants of $\bfT$. This leads to the specific form used in our work:
\begin{equation} \label{eq:final_constitutive}
     \bfeps = \Psi_{0}\left( \tr(\bfT), \| \bfT \| \right) \bfI + \Psi_{1}\left( \| \bfT \| \right) \bfT,
\end{equation}
where $\Psi_{0}$ and $\Psi_{1}$ are scalar-valued functions, and to ensure consistency in the stress-free state, we require $\Psi_{0}(0, \cdot) = 0$.

\subsection{The governing boundary value problem}
The complete BVP combines the constitutive relation \eqref{eq:final_constitutive} with the fundamental principles of mechanics. The final system of equations to be solved is:
\begin{subequations}\label{eq:full_governing_system_expanded}
\begin{align}
    -\nabla \cdot \bfT &= \bfa{0}, \quad \text{and} \quad \bfT = \bfT^\mathrm{T}, \tag{\ref{eq:full_governing_system_expanded}a} \\
    \bfeps &= \Psi_{0}\left( \tr(\bfT), \| \bfT \| \right) \bfI + \Psi_{1}\left( \| \bfT \| \right) \bfT, \tag{\ref{eq:full_governing_system_expanded}b} \\
    \curl(\curl(\bfeps)) &= \bfa{0}, \tag{\ref{eq:full_governing_system_expanded}c} \\
    \bfeps &= \frac{1}{2} \left( \nabla \bfu + \nabla \bfu^{\mathrm{T}} \right). \tag{\ref{eq:full_governing_system_expanded}d}
\end{align}
\end{subequations}
This system comprises the balance of linear and angular momentum (a), the nonlinear strain-limiting constitutive law (b), the Saint-Venant compatibility condition (c), and the linearized strain-displacement relation (d).

\subsection{Anti-plane shear (mode-III) formulation}

This work presents an efficient numerical method for simulating the static response of a strain-limiting elastic body containing a V-notch under anti-plane shear. The investigation is novel in its application of algebraically nonlinear constitutive relations to this class of fracture mechanics problems. The analysis is specialized to the case of Mode-III loading, where the displacement field $\bfa{u}$ has only one non-zero component, which is a function of the in-plane coordinates:
\begin{equation}\label{eq:disp_vector}
    \bfa{u}(x, y) = (0, 0, w(x, y)).
\end{equation}
Consequently, the only non-zero components of the infinitesimal strain tensor $\bfeps$ are $\epsilon_{13}$ and $\epsilon_{23}$, and the corresponding non-zero stress components of the Cauchy stress tensor $\bfT$ are $T_{13}$ and $T_{23}$.

For anti-plane shear, the stress tensor is deviatoric, meaning its trace is zero ($\tr(\bfT) = 0$). Recalling the general constitutive form from Equation~\eqref{eq:full_governing_system_expanded} and the property that $\Psi_{0}(0, \cdot) = 0$, the first term vanishes identically. The constitutive law, therefore, naturally simplifies to:
\begin{equation}\label{eqn:constitutive_mode3}
    \bfeps = \Psi_{1}(\| \bfT \|) \bfT.
\end{equation}
This relation connects the strain components directly to the stress components:
\begin{equation}\label{eq:strain_components}
    \epsilon_{13} = \Psi_{1}(\| \bfT \|) T_{13}, \quad \epsilon_{23} = \Psi_{1}(\| \bfT \|) T_{23}.
\end{equation}

To ensure the balance of linear momentum is satisfied, we introduce an Airy stress function $\Phi(x, y)$. The stress components are defined as the curl of this potential:
\begin{equation}
    T_{13} = \frac{\partial \Phi}{\partial y}, \quad T_{23} = -\frac{\partial \Phi}{\partial x}.
\end{equation}
This definition identically satisfies the equilibrium equation in the absence of body forces:
\begin{equation}
    \frac{\partial T_{13}}{\partial x} + \frac{\partial T_{23}}{\partial y} = \frac{\partial}{\partial x}\left(\frac{\partial \Phi}{\partial y}\right) + \frac{\partial}{\partial y}\left(-\frac{\partial \Phi}{\partial x}\right) = 0.
\end{equation}
The strain compatibility condition in  Equation~\eqref{eq:full_governing_system_expanded} simplifies significantly for this problem. Since the strain components depend only on $x$ and $y$, it reduces to a single scalar equation:
\begin{equation}\label{eq:strain_compatibility_mode3}
    \frac{\partial \epsilon_{13}}{\partial y} - \frac{\partial \epsilon_{23}}{\partial x} = 0.
\end{equation}
Substituting the constitutive relations \eqref{eq:strain_components} into the compatibility condition \eqref{eq:strain_compatibility_mode3}, and subsequently using the Airy stress function definitions, we arrive at a single governing equation for $\Phi$. 
This yields the following second-order quasilinear elliptic partial differential equation (PDE) for the stress potential $\Phi$:
\begin{equation}\label{pde:nlin1}
    - \nabla \cdot \left( \Psi_{1} ( \| \nabla \Phi \| ) \nabla \Phi \right) = 0.
\end{equation}
For the subsequent development of the BVP, we adopt the following specific form for $\Psi_{1}$, which models strain-limiting behavior:
\begin{equation}\label{eq:Psi_specific}
    \Psi_{1}(\|\bfT\|) = \frac{1}{2\mu (1 +  \left( \beta\|\bfT\| \right)^{\alpha})^{1/\alpha}},
\end{equation}
where $\mu$ is the classical shear modulus. The governing PDE \eqref{pde:nlin1} now takes the concrete form:
\begin{equation} \label{pde:mech}
    - \nabla \cdot \left( \frac{\nabla \Phi}{2\mu (1 + \left( \beta \,  \|\nabla \Phi\| \right)^\alpha )^{1/\alpha}} \right) = 0.
\end{equation}
The constants $\alpha > 0$ and $\beta \geq 0$ are fundamental modeling parameters; variations of these parameters give rise to a diverse class of constitutive models.  Specifically, the parameter $\alpha$ generally controls the initial stiffness or linear elastic response of the material, while the parameter $\beta$ introduces and governs the degree of non-linearity, such as strain-limiting effects.  This adaptability allows the framework to represent a broad spectrum of mechanical responses, from classical linear elasticity to more complex behaviors. 

The function $\Psi_{1}(r)$ in \eqref{eq:Psi_specific} is monotone and ensures the resulting model can be hyperelastic or non-hyperelastic depending on the context \cite{mai2015strong,rajagopal2007response,rajagopal2011conspectus}. Crucially, it satisfies the limiting condition
\[
 \lim_{r \to \infty} r \Psi_{1}(r) \to \widehat{c}, \quad \mbox{where} \quad  \widehat{c} = 1/(2\mu \, \beta).
 \]
  A profound consequence of this model's constitutive limit becomes clear when analyzing the stress and strain fields near the tip of a crack with traction-free surfaces. For this case, a key physical insight emerges: while the \textit{stress norm} $\| \mathbf{T} \|$ becomes singular (unbounded) at the crack-tip, the corresponding \textit{strain norm} remains uniformly bounded. This behavior, governed by the relationship
\begin{equation*}
    \| \boldsymbol{\varepsilon} \| = \| \mathbf{T} \| \Psi_{1}(\| \mathbf{T} \|),
\end{equation*}
is physically significant because it prevents the non-physical prediction of infinite deformation at the point of failure, an artifact common to classical linear elastic models \cite{rajagopal2011modeling,mallikarjunaiah2015direct,kulvait2013,MalliPhD2015,ortiz2012numerical,bustamante2011solutions,ortiz2014numerical}. The limiting behavior of the function $\Psi_{1}$ ensures a finite strain response even in the presence of a stress singularity. This feature regularizes the solution at stress concentration areas. Also, the PDE \eqref{pde:mech} is a well-studied equation in mathematical analysis. It is similar to PDEs arising in models of nonlinear diffusion and is identical in form to the minimal surface equation from the calculus of variations. The existence and uniqueness of weak solutions for related BVPs have been established in works such as \cite{bulivcek2015existence,bulivcek2014elastic,bulivcek2015analysis}.

\section{Boundary value problem and existence of solution}
\label{sec:bvp_existence}

This section formally defines the BVP for the anti-plane shear model derived previously. We operate under the assumption of geometrical linearity, where material deformation is small, but the constitutive response is governed by the algebraically nonlinear relationship. The analysis considers a homogeneous, initially unstressed solid body occupying an infinitely long cylindrical domain $\mathcal{B} = \Omega \times \mathbb{R}$, where $\Omega \subset \mathbb{R}^2$ is a simply connected domain with a Lipschitz continuous ($C^{0,1}$) boundary, $\partial\Omega$. The boundary is partitioned into two non-overlapping parts, $\Gamma_D$ and $\Gamma_N$, where Dirichlet and Neumann conditions are applied, respectively.

\subsection{Function spaces and problem formulation}
We begin by defining the standard function spaces. Let $L^p(\Omega)$ for $p \in [1, \infty)$ be the space of Lebesgue integrable functions, and let $(\cdot, \cdot)_{L^2}$ and $\|\cdot\|_{L^2}$ denote the inner product and norm on the space of square-integrable functions, $L^2(\Omega)$. The classical Sobolev space $H^1(\Omega)$ is defined with the norm $\|v\|_{H^1(\Omega)}^2 := \|v\|_{L^2(\Omega)}^2 + \|\nabla v\|_{L^2(\Omega)}^2$. We also define the following essential spaces for our formulation:
\begin{equation}
    V := H^1(\Omega), \quad \text{and} \quad V_0 := \{ v \in H^1(\Omega) \mid v = 0 \text{ on } \Gamma_D \}.
\end{equation}
Here, $V_0$ is the space of test functions that vanish on the Dirichlet boundary.

As established in \cite{yoon2021quasi,yoon2022MAM,fernando2025}, the problem can be formulated entirely in terms of the Airy stress potential, $\Phi$. This elegantly reduces the original vector-valued traction and displacement problem into a single scalar, non-homogeneous Dirichlet problem. \\

\begin{cf}
Given $\mu$, $\beta$, and $\alpha$, find the stress potential $\Phi \in C^2(\overline{\Omega})$ such that:
\begin{subequations}\label{eq:strong_form}
\begin{align}
    -\nabla \cdot \left( \frac{\nabla \Phi}{2\mu \left(1 + \left( \beta  \|\nabla \Phi\| \right)^\alpha    \right)^{1/\alpha}} \right) &= 0 && \text{in } \Omega, \\
    \Phi &= \widehat{\Phi} && \text{on } \Gamma_D, \label{eq:dirichlet_bc} \\
    \left( \frac{\nabla \Phi}{2\mu \left(1 + \left( \beta  \|\nabla \Phi\| \right)^\alpha    \right)^{1/\alpha}} \right) \cdot \bfa{n} &= g && \text{on } \Gamma_N, \label{eq:neumann_bc}
\end{align}
\end{subequations}
where $\widehat{\Phi}$ and $g$ are prescribed functions and $\bfa{n}$ is the outward unit normal to $\Gamma_N$. For simplicity in this exposition, we consider a pure Dirichlet problem where $\partial\Omega = \Gamma_D$.
\end{cf}

To find a solution, the governing partial differential equation (PDE) is first recast into its weak formulation. 
This is accomplished by the standard procedure of multiplying the PDE by a suitable test function $\varphi \in V_0$ and integrating by parts over the domain.  This process transfers a derivative to the test function, thereby weakening the required smoothness of the solution $\Phi$. 
The resulting variational problem is: 
\begin{cwf}
Find $\Phi \in V$ that satisfies the essential boundary condition $\Phi = \widehat{\Phi}$ on the Dirichlet boundary $\Gamma_D$ such that for all test functions $\varphi \in V_0$:
\begin{equation}\label{eq:weak_form}
    \left( \frac{\nabla \Phi}{2\mu \left(1 + \left( \beta \|\nabla \Phi\| \right)^\alpha \right)^{1/\alpha}}, \nabla \varphi \right) = 0.
\end{equation}
\end{cwf}
This particular non-linear formulation, which describes a class of strain-limiting elastic solids, is an extension of similar frameworks investigated in \cite{bulivcek2014elastic,bulivcek2015existence,kulvait2019state,kulvait2013anti,yoon2022MAM}.

\subsection{Existence and Uniqueness of the Solution}
The well-posedness of the weak formulation \eqref{eq:weak_form} is non-trivial due to the nature of the nonlinearity. Fortunately, the governing equation is a specific case of a more general class of problems whose mathematical properties are well-established. The work of Bulíček et al. \cite{bulivcek2015existence} provides a rigorous proof of existence and uniqueness for such models. By adapting their main result to our specific case, we can assert the well-posedness of our problem.

\begin{Theorem}[Existence and Uniqueness, adapted from \cite{bulivcek2015existence}]
Let $\Omega \subset \mathbb{R}^2$ be a bounded domain with a $C^{0,1}$ boundary $\partial\Omega = \Gamma_D$. Assume there exists a function $\Phi^0 \in W^{1,\infty}(\Omega)$ such that its trace satisfies $\Phi^0|_{\Gamma_D} = \widehat{\Phi}$. Then the weak formulation \eqref{eq:weak_form} admits a unique solution $\Phi \in H^1(\Omega)$.
\end{Theorem}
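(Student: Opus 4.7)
The plan is to show that the operator
$$\mathcal{A}(\bfxi) := \frac{\bfxi}{2\mu\,\bigl(1 + (\beta\|\bfxi\|)^{\alpha}\bigr)^{1/\alpha}}, \qquad \bfxi\in\mathbb{R}^2,$$
satisfies the structural hypotheses of the existence framework in Bulíček et al.\ \cite{bulivcek2015existence} and then to specialize their result. First I would homogenize the boundary condition: setting $\Phi = \Phi^0 + w$ with $w \in V_0$, the problem becomes that of finding $w\in V_0$ with
$$\bigl(\mathcal{A}(\nabla \Phi^0 + \nabla w),\, \nabla \varphi\bigr) = 0 \qquad \forall\, \varphi \in V_0.$$
The lifting $\Phi^0\in W^{1,\infty}(\Omega)$ assumed in the hypothesis makes this reformulation legitimate, since $\nabla\Phi^0\in L^{\infty}(\Omega;\mathbb{R}^2)$.

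Next, I would verify the four structural properties needed. \emph{Potential structure:} since $\mathcal{A}(\bfxi)=h(\|\bfxi\|)\,\bfxi$ with $h$ scalar, a primitive $\mathcal{W}(\bfxi)=F(\|\bfxi\|)$ with $F'(r)=r\,h(r)$ is available, and the monotonicity of $r\mapsto r\,h(r)$ gives convexity of $\mathcal{W}$. \emph{Strain limiting / uniform boundedness:} a direct computation yields $\|\mathcal{A}(\bfxi)\|\le 1/(2\mu\beta)$, i.e.\ $\mathcal{A}$ is bounded on all of $\mathbb{R}^2$. \emph{Strict monotonicity:} $(\mathcal{A}(\bfxi)-\mathcal{A}(\bfeta))\cdot(\bfxi-\bfeta)>0$ for $\bfxi\ne\bfeta$ follows from the convexity and strict radial monotonicity of $\mathcal{W}$. \emph{Continuity/Lipschitz:} smoothness of $h$ on $[0,\infty)$ delivers local Lipschitz continuity of $\mathcal{A}$. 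These are exactly the hypotheses under which \cite{bulivcek2015existence} produces a weak solution in the appropriate Sobolev space.

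For existence I would use a monotone regularization: set $\mathcal{A}_{\epsilon}(\bfxi):=\mathcal{A}(\bfxi)+\epsilon\,\bfxi$, which is strongly monotone and coercive on $H^1$, so that the Minty--Browder theorem yields a unique $w_\epsilon\in V_0$. Testing with $w_\epsilon$ and exploiting $\Phi^0\in W^{1,\infty}$, I would derive $\epsilon$-uniform a priori bounds, enabling extraction of a weak $H^1$-limit $w$ along a subsequence and a weak-$*$ $L^{\infty}$ limit $\bfa{\chi}$ of $\mathcal{A}(\nabla\Phi^0+\nabla w_\epsilon)$. The critical step is identifying $\bfa{\chi}=\mathcal{A}(\nabla\Phi^0+\nabla w)$; I would do this by Minty's device, testing the regularized equation against $w_\epsilon - v$ for arbitrary $v\in V_0$, using monotonicity to drop the nonlinearity past the weak limit, and then varying $v$ to recover the limiting equation. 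Uniqueness is then immediate: subtracting two weak solutions, testing with their $V_0$-difference, and invoking strict monotonicity of $\mathcal{A}$ forces the difference to vanish.

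The main obstacle is the \emph{linear growth} of $\mathcal{W}$ at infinity, inherited from the limit $r\,h(r)\to 1/(2\mu\beta)$. This prevents direct $H^1$-coercivity of the energy functional $\mathcal{J}(\Phi)=\int_\Omega \mathcal{W}(\nabla\Phi)\,dx$, so a naive direct-method argument in $H^1$ fails and one is pushed toward $W^{1,1}$ or $BV$-type frameworks in general. The $W^{1,\infty}$ regularity of the Dirichlet lift $\Phi^0$ is precisely what bridges this gap in the present two-dimensional setting, allowing the uniform $H^1$-bounds on $w_\epsilon$ needed to stay inside $H^1(\Omega)$ and avoid the measure-valued setting. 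Verifying these bounds carefully, in a way that is compatible with the passage $\epsilon\to 0$, is the technical crux of the argument.
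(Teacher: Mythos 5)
Your proposal follows the same high-level route as the paper — invoke the monotone-operator framework of Bulí\v{c}ek et al.\ — but you are considerably more careful than the paper's own sketch, and you correctly catch a subtlety the paper glosses over. The paper asserts that the operator $\bfa{A}(\xi)=\xi/(1+\|\xi\|)$ is ``strictly monotone and coercive,'' but as you observe, strain limiting forces $\|\mathcal{A}(\xi)\|\le 1/(2\mu\beta)$, hence $\mathcal{A}(\xi)\cdot\xi$ grows only linearly in $\|\xi\|$; the associated energy has linear (not quadratic) growth and the operator is \emph{not} coercive on $H^1(\Omega)$. This is precisely why a naive Browder--Minty argument in $H^1$ fails, why the cited work must operate near $W^{1,1}$/$BV$, and why the $W^{1,\infty}$ lifting $\Phi^0$ is a substantive hypothesis rather than a technicality. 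Your proposal --- homogenize via $\Phi^0$, verify boundedness, strict monotonicity, Lipschitz continuity, and potential structure, regularize with $\mathcal{A}_\epsilon=\mathcal{A}+\epsilon\,\xi$ to restore coercivity, pass $\epsilon\to 0$ with Minty's device, and close uniqueness via strict monotonicity --- is exactly the structure of the underlying argument in \cite{bulivcek2015existence}, and it corrects the paper's misstatement.

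The one place where you should be wary of treating the argument as routine is the step you flag yourself: establishing $\epsilon$-uniform $H^1$ bounds on $w_\epsilon$. Testing the regularized equation with $w_\epsilon$ and using $\|\mathcal{A}\|\le C$ yields only $\epsilon\|\nabla w_\epsilon\|_{L^2}^2\lesssim \|\nabla w_\epsilon\|_{L^1}+\epsilon\|\nabla\Phi^0\|_{L^2}\|\nabla w_\epsilon\|_{L^2}$, which gives an $\epsilon$-dependent bound, not a uniform one. The uniform $H^1$ estimate in \cite{bulivcek2015existence} is not obtained this way; it requires either a maximum-principle/$L^\infty$-gradient argument exploiting the $W^{1,\infty}$ lift and special structure of the two-dimensional anti-plane problem, or additional regularity hypotheses on the domain and data that the paper here simply inherits by citation (and which are delicate in the presence of a crack). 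Your instinct that this is ``the technical crux'' is correct, but as written the proposal does not yet resolve it; it would be worth either filling in that estimate or stating explicitly, as the paper tacitly does, that this step is deferred to \cite{bulivcek2015existence}.
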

\begin{proof}
The proof relies on the theory of monotone operators. The formulation \eqref{eq:weak_form} is a special instance of the problem analyzed in \cite{bulivcek2015existence} (with their model parameters set to $\alpha=1$ and $\beta=1$). The operator $\bfa{A}(\xi) := \xi / (1 + \|\xi\|)$ is strictly monotone and coercive, which are the key ingredients for applying standard existence theorems for quasilinear elliptic PDEs. For a detailed proof, the reader is referred to the original work in \cite{bulivcek2015existence}. 
\end{proof}

Since the governing nonlinear PDE \eqref{eq:strong_form} does not admit a closed-form analytical solution, even for simplified domains, a numerical approach is required. We therefore proceed to develop an $hp$-adaptive finite element method to approximate the solution to the weak formulation \eqref{eq:weak_form}. The subsequent section is dedicated to this task, where we introduce the discretization scheme. The proposed method employs adaptivity in $h$ and $p$, which is designed to enhance the accuracy of the numerical integration and the overall solution.

\section{Numerical Method} \label{sec:numerical_method}

The quasilinear nature of the governing PDE precludes the derivation of a closed-form analytical solution. Consequently, we employ a numerical approach based on the finite element method to obtain an approximate solution. A primary objective is to contrast the predictions of this nonlinear model with those of classical linear elasticity. While we use a standard $hp$-adaptive continuous Galerkin finite element method, a rigorous analysis of the discretization's optimality is beyond the scope of this paper and remains a topic for future investigation.

\subsection{Problem Statement and Linearization via Newton's Method}
Let $\Omega \subset \mathbb{R}^2$ be an open, bounded, and connected Lipschitz domain. Its boundary $\partial\Omega$ is composed of two disjoint parts, $\Gamma_D$ and $\Gamma_N$, where we assume $\Gamma_D \neq \emptyset$. We seek to solve the following BVP for the stress potential $\Phi$:
\begin{align} 
    -\nabla \cdot \left( \frac{\nabla \Phi}{2\mu(1 + \beta^{\alpha} \, \|\nabla \Phi\|^{\alpha})^{1/\alpha}} \right) &= 0 && \text{in } \Omega, \label{eq:pde_main} \\
    \Phi &= g(\bfa{x}) && \text{on } \Gamma_D, \label{eq:dirichlet_main}
\end{align}
where $g(\bfa{x})$ is a given continuous function. The derivation of appropriate boundary conditions for $\Phi$ from physical traction data is discussed in detail in \cite{kulvait2013}. To solve this nonlinear problem, we employ Newton's method. Let us define the nonlinear residual operator $F(\Phi)$ as:
\begin{equation}
    F(\Phi) := -\nabla \cdot \left( \frac{\nabla \Phi}{2\mu(1 + \beta^{\alpha} \, \|\nabla \Phi\|^{\alpha})^{1/\alpha}} \right).
\end{equation}
Given an iterate $\Phi^n$, the next approximation $\Phi^{n+1} = \Phi^n + \rho^n \;  \delta\Phi^n$ is found by solving a linear problem for the update $\delta\Phi^n$.  The term $\rho^n$ is a crucial damping factor, or step length, for the  $n$-th iteration of the solver. Its value is not fixed but is dynamically determined at each step by employing a line search algorithm. The primary goal of the line search is to find a suitable $\rho^n \in (0, 1]$ that ensures the update leads to a significant reduction in the nonlinear residual.  This strategy promotes global convergence and prevents the solver from taking overly aggressive steps that might lead to divergence, particularly when the  initial guess is far from the true solution. We employ the linearization at the PDE level for the  equation $F(\Phi^{n+1}) = 0$, which yields:
\begin{equation}\label{eq:newton_abstract}
    \nabla F(\Phi^n)[\delta\Phi^n] = -F(\Phi^n),
\end{equation}
where $\nabla F(\Phi^n)[\cdot]$ is the Gâteaux (or directional) derivative of $F$ at $\Phi^n$. This derivative can be computed as:
\begin{align}
    \nabla F(\Phi^n)[\delta\Phi^n] &:= \lim_{\varepsilon \to 0} \frac{F(\Phi^n + \varepsilon \delta\Phi^n) - F(\Phi^n)}{\varepsilon} \\
    &= -\nabla \cdot \left( \frac{\nabla \delta\Phi^n}{2\mu(1 + \beta\|\nabla \Phi^n\|^{\alpha})^{1/\alpha}} - \frac{\beta\|\nabla \Phi^n\|^{\alpha-2}(\nabla \Phi^n \cdot \nabla \delta\Phi^n)\nabla \Phi^n}{2\mu(1 + \beta\|\nabla \Phi^n\|^{\alpha})^{1/\alpha + 1}} \right).
\end{align}
Substituting this into \eqref{eq:newton_abstract}, we obtain the strong form of the linearized PDE for the update $\delta\Phi^n$.

\subsection{Variational formulation}
We now formulate the linearized problem in a weak form suitable for general finite element analysis. Multiplying \eqref{eq:newton_abstract} by a test function $\varphi \in H_0^1(\Omega)$ and integrating by parts, we obtain the linearized variational problem.

At each Newton step $n \ge 0$, find the update $\delta\Phi^n \in H_0^1(\Omega)$ such that for all $\varphi \in H_0^1(\Omega)$:
\begin{equation}\label{eq:weak_form_linearized}
\begin{split}
    & \left( \frac{\nabla \delta\Phi^n}{2\mu(1 + \beta^{\alpha} \|\nabla \Phi^n\|^{\alpha})^{1/\alpha}}, \nabla\varphi \right) - \left( \frac{\beta^{\alpha} \, \|\nabla \Phi^n\|^{\alpha-2}(\nabla \Phi^n \cdot \nabla \delta\Phi^n)\nabla \Phi^n}{2\mu(1 + \beta^{\alpha} \, \|\nabla \Phi^n\|^{\alpha})^{1/\alpha + 1}}, \nabla\varphi \right) \\
    & \qquad = -\left( \frac{\nabla \Phi^n}{2\mu(1 + \beta^{\alpha}\|\nabla \Phi^n\|^{\alpha})^{1/\alpha}}, \nabla\varphi \right).
\end{split}
\end{equation}

\begin{remark}
Newton's method requires an initial guess $\Phi^0$ that is sufficiently close to the true solution for convergence. A robust strategy, which we adopt, is to first solve the corresponding linear problem (by setting $\beta=0$) to obtain a good initial guess for the first iteration ($n=0$) of the nonlinear problem.
\end{remark}

\subsection{Finite element discretization}
Let $\{\mathcal{T}_h\}_{h>0}$ be a shape-regular family of partitions of the domain $\Omega$ into quadrilateral elements $\mathcal{K}$, with $h = \max_{\mathcal{K} \in \mathcal{T}_h} \text{diam}(\mathcal{K})$. We define the finite element space $V_h$ of continuous piecewise bilinear polynomials:
\[
    V_h := \{ v_h \in C^0(\overline{\Omega}) \mid v_h|_{\mathcal{K}} \in \mathbb{Q}_k(\mathcal{K}) \text{ for all } \mathcal{K} \in \mathcal{T}_h \} \subset H^1(\Omega).
\]
Here $\mathbb{Q}_k$ is the space of $k^{th}$ order polynomials. 
Let $V_{h,0}$ be the subspace of $V_h$ for functions vanishing on $\Gamma_D$. The discrete problem is to find the nodal values of the update $\delta\Phi_h^n \in V_{h,0}$ by solving the discretized version of \eqref{eq:weak_form_linearized}:
\begin{equation}\label{eq:discrete_system}
    a_{\Phi_h^n}(\delta\Phi_h^n, \varphi_h) = l_{\Phi_h^n}(\varphi_h) \quad \forall \varphi_h \in V_{h,0},
\end{equation}
where the iterate-dependent bilinear form $a_{\Phi_h^n}(\cdot, \cdot)$ and linear functional $l_{\Phi_h^n}(\cdot)$ are defined as:
\begin{align}
    a_{\Phi_h^n}(w_h, \varphi_h) &:= \left( \frac{\nabla w_h}{2\mu(1 + \beta^{\alpha}\|\nabla \Phi_h^n\|^{\alpha})^{1/\alpha}}, \nabla\varphi_h \right) \nonumber \\
    & \qquad - \left( \frac{\beta^{\alpha}\|\nabla \Phi_h^n\|^{\alpha-2}(\nabla \Phi_h^n \cdot \nabla w_h)\nabla \Phi_h^n}{2\mu(1 + \beta^{\alpha}\|\nabla \Phi_h^n\|^{\alpha})^{1/\alpha + 1}}, \nabla\varphi_h \right), \label{eq:bilinear_form_a} \\
    l_{\Phi_h^n}(\varphi_h) &:= -\left( \frac{\nabla \Phi_h^n}{2\mu(1 + \beta^{\alpha}\|\nabla \Phi_h^n\|^{\alpha})^{1/\alpha}}, \nabla\varphi_h \right). \label{eq:linear_functional_l}
\end{align}
At each step, we solve the linear system of equations resulting from \eqref{eq:discrete_system} to find the discrete update $\delta\Phi_h^n$, and then update the solution via $\Phi_h^{n+1} = \Phi_h^n + \rho^n \delta\Phi_h^n$ until the residual, given by the norm of $l_{\Phi_h^n}$, falls below a prescribed tolerance. 

\subsection{An exposition of $hp$-adaptive finite element methods}

The pursuit of accuracy and efficiency in finite element analysis has led to the development of adaptive methods, which dynamically adjust the discretization to the specific characteristics of the problem being solved. These methods automatically refine the simulation in regions where the numerical error is high, thereby optimizing computational effort. Two fundamental approaches to adaptivity exist: $h$-adaptivity and $p$-adaptivity. The $hp$-adaptive strategy represents a robust synthesis of these two paradigms, aiming to achieve superior convergence rates by making an intelligent, localized choice between them. To understand the $hp$-adaptive method, one must first appreciate its constituent parts. Each strategy offers a distinct mechanism for reducing discretization error.

\subsubsection{$h$-Adaptivity: Refining the mesh}
The more traditional of the two, $h$-adaptivity, involves reducing the size, $h$, of the elements in the computational mesh. The core principle is that a finer mesh provides a better resolution of the solution \cite{bangerth2013adaptive,zienkiewicz1991adaptivity}. This method is particularly effective at handling solutions with sharp, localized features such as boundary layers, steep gradients, or singularities (e.g., at re-entrant corners) \cite{gresho1981,verfurth1994posteriori,verfurth1994posteriori}. By selectively subdividing cells in these problematic areas, $h$-refinement can accurately capture geometric complexities and non-smooth solution behavior. However, for problems where the solution is globally smooth, $h$-adaptivity can be suboptimal. It may require an excessively large number of small elements to achieve a desired accuracy, leading to a dramatic increase in the number of degrees of freedom and, consequently, high computational cost.

The core idea of any residual-based error estimator is that the true error, $e = u - u_h$, where $u$ is the exact solution and $u_h$ is the finite element solution, is driven by the extent to which $u_h$ fails to satisfy the original partial differential equation \cite{de1983posteriori}. This failure is quantified by the \textit{residual}. For a simple Poisson problem, $-\Delta u = f$, the error is mathematically linked to two forms of the residual:
\begin{enumerate}
    \item \textbf{The Element Residual ($R_K$):} This measures how well the equation is satisfied \textit{inside} each element $K$. It is defined as $R_K = f + \Delta u_h$.
    \item \textbf{The Jump Residual ($J_F$):} This arises because, for standard continuous finite elements, the gradient of the numerical solution, $\nabla u_h$, is typically discontinuous across the interior faces of the mesh. The jump residual measures the magnitude of this discontinuity in the normal flux across each face $F$.
\end{enumerate}

The \textit{Kelly error estimator} \cite{kelly1983posteriori,de1983posteriori} combines these residuals into a single, computable value, $\eta_K$, for each cell $K$ in the mesh. This value serves as an indicator of the local error. The formula for the estimator on a given cell $K$ can be expressed as:
$$
\eta_K^2 = h_K^2 \int_K (f + \Delta u_h)^2 \,dx + \sum_{F \in \partial K} h_F \int_F \left( \jump{\nabla u_h \cdot \mathbf{n}} \right)^2 \,ds
$$
Let's break down the components of this formula:
\begin{itemize}
    \item \textbf{$\eta_K$}: This is the error indicator for cell $K$. While not a strict upper bound on the true error, it is proven to be proportional to it, making it an excellent guide for refinement.
    \item \textbf{$h_K$ and $h_F$}: These are the diameters of the cell $K$ and face $F$, respectively. These scaling factors are crucial for ensuring the estimator has the correct physical units and for balancing the contributions of the element and jump residuals.
    \item \textbf{The element residual term}: The first term, involving the integral over the cell $K$, accounts for the error originating from within the element. For linear finite elements ($p=1$), the term $\Delta u_h$ is zero, and this integral simplifies. In many practical implementations, this term is often neglected. This is because the jump residual is computationally cheaper to evaluate and is frequently the dominant source of error, especially in regions where the solution's second derivatives are large.
    \item \textbf{The Jump Residual Term}: The second term is the heart of the Kelly estimator. The notation $\jump{\nabla u_h \cdot \mathbf{n}}$ represents the jump in the normal component of the gradient across each face $F$ of the cell $K$.
    \begin{itemize}
        \item For an \textbf{interior face} $F$ shared by two cells, $K_1$ and $K_2$, the jump is the difference in the normal derivatives as computed from each side: $\jump{\nabla u_h \cdot \mathbf{n}} = \nabla u_h|_{K_1} \cdot \mathbf{n}_1 + \nabla u_h|_{K_2} \cdot \mathbf{n}_2$.
        \item For a \textbf{boundary face} $F$ lying on the domain boundary $\partial\Omega$, the jump measures the difference between the computed flux and the flux prescribed by Neumann boundary conditions ($g = \nabla u \cdot \mathbf{n}$). The term becomes $\nabla u_h \cdot \mathbf{n} - g$. If the boundary condition is of Dirichlet type, this term is zero.
    \end{itemize}
\end{itemize}
The intuition is powerful: a large jump in the gradient signifies a ``kink'' in the numerical solution's derivative. This indicates that the true solution likely has high curvature (large second derivatives) in that area, which the current piecewise polynomial approximation is struggling to capture. The estimator correctly identifies these regions as having high error.

In static fracture analysis, the solution's gradient is singular at the crack-tip. A numerical approximation on a coarse mesh will invariably fail to capture this singularity, resulting in significant, non-physical flux discontinuities between the elements surrounding the crack-tip. The normal jump term therefore acts as a natural and robust indicator, isolating the region of high error. Consequently, when used to drive an adaptive mesh refinement strategy, it ensures that computational resources are concentrated precisely where they are most needed—at the crack-tip—enabling an accurate and efficient resolution of the singular crack-tip fields.

The vector of cell-wise error indicators, $\{\eta_K\}$, provided by the Kelly estimator is the essential input for an $h$-adaptive strategy. The goal is to refine the mesh where the error is significant and, optionally, coarsen it where the error is small. This is implemented in our code through a simple yet effective loop: 
\[
\textbf{SOLVE $\rightarrow$ ESTIMATE $\rightarrow$ MARK $\rightarrow$ REFINE}.
\]
\begin{enumerate}
    \item \textbf{SOLVE}: The finite element problem is solved on the current mesh to obtain the solution $u_h$.
    \item \textbf{ESTIMATE}: This step takes the finite element solution vector and other relevant data (like boundary conditions) and computes the vector of indicators, \texttt{eta}, where \texttt{eta[i]} holds the value of $\eta_K$ for the \texttt{i}-th cell.
    \item \textbf{MARK}: This is the crucial decision-making step where cells are flagged for refinement or coarsening based on the estimated error. The core ideas behind this function are as follows:
    \begin{itemize}
        \item It sorts the cells based on their error indicator values, from largest to smallest.
        \item It marks a certain fraction of cells with the highest error for refinement.
        \item It marks a different fraction of cells with the lowest error for coarsening.
    \end{itemize}
    \item \textbf{REFINE}: The triangulation is then modified according to the flags set in the marking step. Our code automatically handles the complex bookkeeping involved in this process, including the creation of \textit{hanging nodes} where a refined cell meets an unrefined one. It then sets up the necessary constraints to ensure the finite element solution remains continuous across these non-standard interfaces.  
\end{enumerate}
This adaptive cycle is repeated until a desired level of accuracy or a maximum number of cells is reached. The result is a highly optimized mesh, automatically tailored to the problem's unique features, which delivers a more accurate solution for a given amount of computational work.

\subsubsection{$p$-Adaptivity: Enriching the approximation space}

In contrast, $p$-adaptivity leaves the mesh geometry unchanged and instead increases the polynomial degree, $p$, of the basis functions used on the elements \cite{babuska1981p,duster2001p}. By moving from linear to quadratic, cubic, or even higher-order polynomials, the approximation space becomes richer and can represent more complex functions on the same element. This approach is exceptionally powerful for problems where the solution is smooth. For such solutions, $p$-adaptivity can yield exponential rates of convergence, a phenomenon often referred to as spectral convergence. This means that a desired accuracy can be reached with far fewer degrees of freedom compared to $h$-refinement. Its primary weakness, however, is its relative ineffectiveness in the presence of singularities or non-smooth features, where the theoretical advantages of high-order polynomials are diminished.

\subsubsection{The \textit{hp}-Strategy: A synergistic hybrid approach}

The $hp$-adaptive method is developed from the recognition that neither $h$- nor $p$-adaptivity is universally superior. The optimal strategy depends on the local character of the solution. The $hp$ method, therefore, does not commit to one approach but instead makes a dynamic, cell-by-cell decision to either subdivide an element ($h$-refinement) or increase its polynomial degree ($p$-refinement) \cite{bonito2024adaptive}. The central goal is to leverage the strengths of each method precisely where it is most effective:
\begin{itemize}
    \item Use $h$-refinement to resolve geometric singularities and sharp, non-smooth fronts.
    \item Use $p$-refinement to capture smooth solution behavior efficiently.
\end{itemize}
By doing so, the $hp$-method can theoretically achieve exponential convergence rates even for the broad class of problems that involve both smooth regions and localized singularities.

\subsubsection{The \textit{decision}-\textit{marking} criterion}

The effectiveness of the $hp$-adaptive method is critically dependent on its decision-making algorithm, which arbitrates between local mesh refinement ($h$) and polynomial degree enrichment ($p$). The specific strategy implemented in this work provides a robust foundation for future investigations and extensions. The process can be broken down into two main stages for each cell flagged for refinement by a global error estimator. First, an a posteriori error estimator, such as the Kelly error indicator (described in the previous subsection), is used to approximate the numerical error on every cell of the mesh. Cells whose estimated error contributes significantly to the total global error are marked as candidates for refinement. This step identifies \emph{where} the discretization is insufficient but does not yet determine \emph{how} to improve it.

Once a cell is marked for refinement, the $hp$-algorithm must decide between subdividing it or enriching its polynomial degree. This choice is based on an analysis of the solution's local smoothness within that cell. A clever way to probe this smoothness is to compare the existing numerical solution on the cell with a projection of that solution onto a lower-order polynomial space. The intuition is as follows:
\begin{itemize}
    \item If the solution on a cell is very smooth (analytic), it can be well-approximated by polynomials. The difference between the solution and its projection onto a lower-degree space will be relatively large, as the higher-order components are significant. This indicates that the solution has complex features that a higher polynomial degree could capture more efficiently. In this scenario, the algorithm chooses \textbf{$p$-refinement}.

    \item Conversely, if the solution is not smooth (e.g., near a singularity), its energy is spread more evenly across all polynomial modes. The projection onto a lower-degree space will capture a substantial part of the solution, and the difference will be relatively small compared to the total error on the cell. This suggests that simply increasing the polynomial degree will not be very effective. The error is more likely due to a feature that requires finer geometric resolution. In this case, the algorithm opts for \textbf{$h$-refinement}.
\end{itemize}
This elegant logic allows the simulation to tailor its discretization strategy to the local physics of the problem, creating highly optimized meshes that deliver exceptional accuracy for a given computational budget. The implementation of such a scheme requires specialized data structures. Our code, which is based on the open-source \textsf{deal.II} library is capable of managing a mesh where different cells can have different polynomial degrees.

\section{Results and discussions}\label{rd}
This section is dedicated to the numerical verification of the proposed strain-limiting fracture model, with a focus on its capacity to produce bounded strains in the presence of unbounded, near-tip stresses. The governing partial differential equation \eqref{pde:mech} is discretized using an advanced $hp$-adaptive finite element method. This strategy is particularly well-suited for resolving the singular fields characteristic of fracture problems, as it synergistically combines local mesh refinement ($h$-refinement) with local increases in the polynomial degree of the basis functions ($p$-refinement). Our implementation, developed within the robust, open-source \texttt{deal.II} finite element library \cite{arndt2021deal}, leverages a sophisticated decision-making algorithm to determine the optimal refinement strategy for each region of the domain, ensuring an efficient allocation of computational resources.

To address the inherent nonlinearity of the second-order quasilinear partial differential equation, we employ a Newton-Raphson scheme. This iterative method approximates the nonlinear problem with a sequence of linear ones by computing the Fréchet derivative of the underlying operator at the current solution estimate. Consequently, each iteration necessitates the numerical solution of a large, sparse linear system of algebraic equations for the incremental update. The process is deemed to have converged and is subsequently terminated when the Euclidean norm of the residual vector falls below a prescribed tolerance.

To demonstrate the framework's efficacy and versatility, we investigate a scenario involving a classical anti-plane shear crack problem, which serves as a benchmark for $ hp$-adaptive finite element implementation and the analysis of the crack-tip field predicted by the nonlinear model.  A primary objective of this numerical investigation is to elucidate the regularizing effect of the strain-limiting constitutive law. Specifically, we demonstrate that the model yields bounded strains in the vicinity of stress concentrations, a result that stands in stark contrast to the non-physical, square-root singular strain fields predicted by classical linear elastic fracture mechanics.  While the present work utilizes a continuous Galerkin formulation, the robust performance of the $hp$-adaptive framework underscores its suitability for these challenging problems. A rigorous a priori error analysis of the discretization, similar to the one done in \cite{manohar2024hp}, is a subject of ongoing research. 

The following algorithm illustrates all the steps involved in obtaining the numerical solution based on the $ hp$-adaptive finite element method.

\begin{algorithm}[H]
    \caption{$hp$-Adaptive Finite Element Method for Quasilinear PDE}
    \label{alg:hp_adaptive_fem}
    \begin{algorithmic}[1]
        \State \textbf{Initialization:}
        \State Set adaptive loop counter $k=0$.
        \State Define initial coarse mesh $\mathcal{T}_0$ and initial polynomial degree distribution $\mathbb{Q}_0$.
        \State Initialize solution $\Phi_{h,k}^0$ (e.g., from a trivial state or previous computation).
        \State Define tolerances: $\text{TOL}_{\text{Newton}}$, $\text{TOL}_{\text{adapt}}$.
        \State Define marking parameters: $0 < \theta_p, \theta_h < 1$ with $\theta_h + \theta_p < 1$.

        \Repeat \Comment{Main Adaptive Loop}
            \State Set Newton iteration counter $n=0$.
            \State Set current solution iterate $\Phi_{h,k}^{n} \leftarrow \Phi_{h,k}$.

            \Repeat \Comment{Damped Newton's Method Loop}
                \State Assemble the Jacobian matrix and residual vector based on the current iterate $\Phi_{h,k}^{n}$:
                \State \qquad $a_{\Phi_{h,k}^n}(\cdot, \cdot)$ from Eq. \eqref{eq:bilinear_form_a}
                \State \qquad $l_{\Phi_{h,k}^n}(\cdot)$ from Eq. \eqref{eq:linear_functional_l}
                \State Solve the linear system for the update: Find $\delta\Phi_{h,k}^n \in V_{h,0}$ such that
                \State \qquad $a_{\Phi_{h,k}^n}(\delta\Phi_{h,k}^n, \varphi_h) = l_{\Phi_{h,k}^n}(\varphi_h) \quad \forall \varphi_h \in V_{h,0}$.
                
                \State \textbf{Line Search:} Find a damping parameter $\rho^n \in (0, 1]$ that ensures a sufficient decrease in the residual norm, i.e., find $\rho^n$ such that:
                \State \qquad $\norm{l_{\Phi_{h,k}^n + \rho^n \delta\Phi_{h,k}^n}} < (1 - \gamma \rho^n) \norm{l_{\Phi_{h,k}^n}}$ for some $\gamma \in (0, 1)$.
                
                \State Update the solution: $\Phi_{h,k}^{n+1} \leftarrow \Phi_{h,k}^n + \rho^n \delta\Phi_{h,k}^n$.
                \State Increment Newton counter: $n \leftarrow n+1$.
            \Until{$\norm{l_{\Phi_{h,k}^n}} < \text{TOL}_{\text{Newton}}$} \Comment{Stopping criterion for Newton's method}

            \State Set converged solution for this adaptive step: $\Phi_{h,k} \leftarrow \Phi_{h,k}^{n}$.

            \State \textbf{A Posteriori Error Estimation:}
            \State For each element $\tau \in \mathcal{T}_k$, compute a local error indicator $\eta_{\tau}(\Phi_{h,k})$.
            \State Compute the global error estimate $\eta_k = \left( \sum_{\tau \in \mathcal{T}_k} \eta_{\tau}^2 \right)^{1/2}$.

            \If{$\eta_k < \text{TOL}_{\text{adapt}}$}
                \State \textbf{break} \Comment{Converged, exit adaptive loop}
            \EndIf

            \State \textbf{Marking Strategy:}
            \State Mark a subset of elements $\mathcal{M}_h \subset \mathcal{T}_k$ for $h$-refinement, typically those with the largest error indicators, such that $\sum_{\tau \in \mathcal{M}_h} \eta_{\tau}^2 \geq \theta_h^2 \eta_k^2$.
            \State Mark a subset of elements $\mathcal{M}_p \subset \mathcal{T}_k \setminus \mathcal{M}_h$ for $p$-enrichment, typically those with high solution regularity, such that $\sum_{\tau \in \mathcal{M}_p} \eta_{\tau}^2 \geq \theta_p^2 \eta_k^2$.

            \State \textbf{Adaptivity:}
            \State Generate a new mesh $\mathcal{T}_{k+1}$ by refining all elements $\tau \in \mathcal{M}_h$.
            \State Define a new polynomial degree distribution $\mathbb{Q}_{k+1}$ by increasing the degree for all elements $\tau \in \mathcal{M}_{\mathbb{Q}}$.
            \State Increment adaptive loop counter: $k \leftarrow k+1$.
            \State Project the solution $\Phi_{h,k-1}$ onto the new finite element space $(\mathcal{T}_k, \mathbb{Q}_k)$ to get an initial guess $\Phi_{h,k}$.

        \Until{a maximum number of adaptive steps is reached or convergence is achieved.}
        
        \State \textbf{return} Converged solution $\Phi_{h,k}$.
    \end{algorithmic}
\end{algorithm}

\subsection{$h$-convergence study}
Our numerical solver, specifically the implementation employed for the case without $hp$-adaptivity, is directly adopted from and precisely the same as the methodology detailed in \cite{yoon2022MAM}. To validate our implementation and ensure consistency with prior work, we conducted an identical $h$-convergence study as reported in \cite{yoon2022MAM}. The results of this study yielded the same convergence rates, thereby confirming the correctness and reliability of our solver. Consequently, to maintain conciseness and focus on the novel contributions of the present work, we refrain from reiterating these established details here. For a comprehensive exposition of the solver's intricacies and the specifics of the $h$-convergence analysis, interested readers are kindly directed to the aforementioned article \cite{yoon2022MAM}.

\subsection{$hp$-adaptive FEM simulation of static crack problem}
The problem of static anti-plane shear crack modeled using algebraically nonlinear constitutive relationships reduces ot solving a second-order quasilinear partial differential equation. The algorithm~\ref{alg:hp_adaptive_fem} is adopted for obtaining the numerical solution to the discrete weak formulation defined in Equations \eqref{eq:bilinear_form_a} and \eqref{eq:linear_functional_l}. We consider a unit square with a crack lying  $\;\; 0.5 \leq x \leq 1, \;\; y=0.5$. The domain and boundary conditions are shown below in Figure~\ref{fig_domain}.

\begin{figure}[H]
\centering
\begin{tikzpicture}[
    scale=5, 
    domain_boundary/.style={draw=black, thick},
    crack_active/.style={line width=0.8mm, blue!80, cap=round}, 
    crack_potential/.style={dashed, line width=0.6mm, gray!60, cap=round}, 
    label_coords/.style={font=\small, text=black}, 
    label_bc/.style={font=\small, text=black},     
    label_crack/.style={font=\small, text=black}    
]

\fill[gray!8] (0,0) rectangle (1,1);
\draw[domain_boundary] (0,0) rectangle (1,1);
\draw[crack_potential] (0,0.5) -- (0.5,0.5);
\draw[crack_active] (0.5,0.5) -- (1,0.5);
\node[label_coords, anchor=north east] at (0,0) {$(0,0)$};
\node[label_coords, anchor=north west] at (1,0) {$(1,0)$};
\node[label_coords, anchor=south west] at (1,1) {$(1,1)$};
\node[label_coords, anchor=south east] at (0,1) {$(0,1)$};
\node[label_bc, rotate=90, anchor=south] at (0,0.75) {$\Gamma^{*}_L: \Phi(0,y)=1$};
% Right boundary (x=1)
\node[label_bc, rotate=90, anchor=north] at (1,0.75) {$\Gamma_R: \Phi(1,y)=0$};
% Bottom boundary (y=0)
\node[label_bc, anchor=north] at (0.5,0) {$\Gamma^{*}_{B}: \Phi(x,0)=1-x$};
% Top boundary (y=1)
\node[label_bc, anchor=south] at (0.5,1) {$\Gamma^{*}_{T}: \Phi(x,1)=1-x$};
\node[label_crack, above=2pt, xshift=0.1cm] at (0.75,0.5) {$\Gamma^{*}_{C}$}; % Label for the crack itself
\node[label_crack, below=2pt, xshift=0.1cm] at (0.75,0.5) {$\Phi(x,0.5)=0$}; % Condition on the crack face

\end{tikzpicture}
\caption{A square domain containing a single edge crack}
\label{fig_domain}
\end{figure}
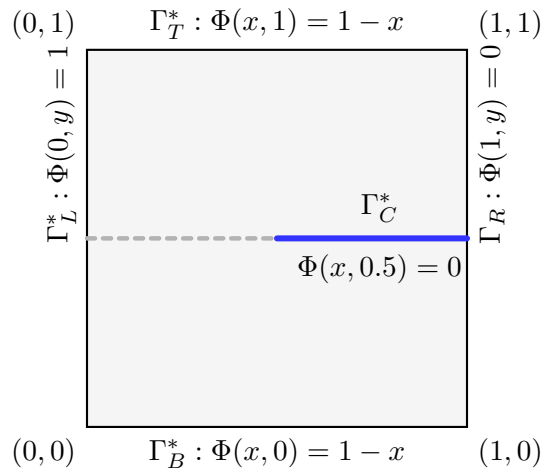

For this specific numerical example, the detailed computational setup was configured as follows. The initial coarse mesh resolution was set to $h=1/8$, serving as the starting point for the adaptive refinement process. A comprehensive parameter study was conducted by systematically varying the key model parameters $\alpha$ and $\beta$ across the discrete set of values $\{0.5, 1.0, 2.0, 5.0, 10.0\}$ for each. Concurrently, the material shear modulus, a critical mechanical property, was held constant at a value of $1.0$ throughout all simulations. To achieve high accuracy and efficiently capture the complex features of the solution, the numerical computations leveraged a combined $hp$-adaptive strategy. This involved performing up to eight levels of adaptive mesh refinement ($h$-refinement), allowing the mesh to become progressively finer in regions requiring higher resolution. Simultaneously, the polynomial degree ($p$) of the finite element basis functions was adaptively increased, reaching up to the $7^{th}$-order in elements where higher approximation capabilities were beneficial. The entire $hp$-adaptive solution process, integrating both mesh refinement and polynomial enrichment, was orchestrated and executed using the robust framework outlined in Algorithm~\ref{alg:hp_adaptive_fem} to efficiently obtain the numerical solution.

\begin{figure}[H]
    \centering % Center the entire figure environment

    % --- First Row ---
    \begin{subfigure}[b]{0.48\textwidth}
        \includegraphics[width=\linewidth]{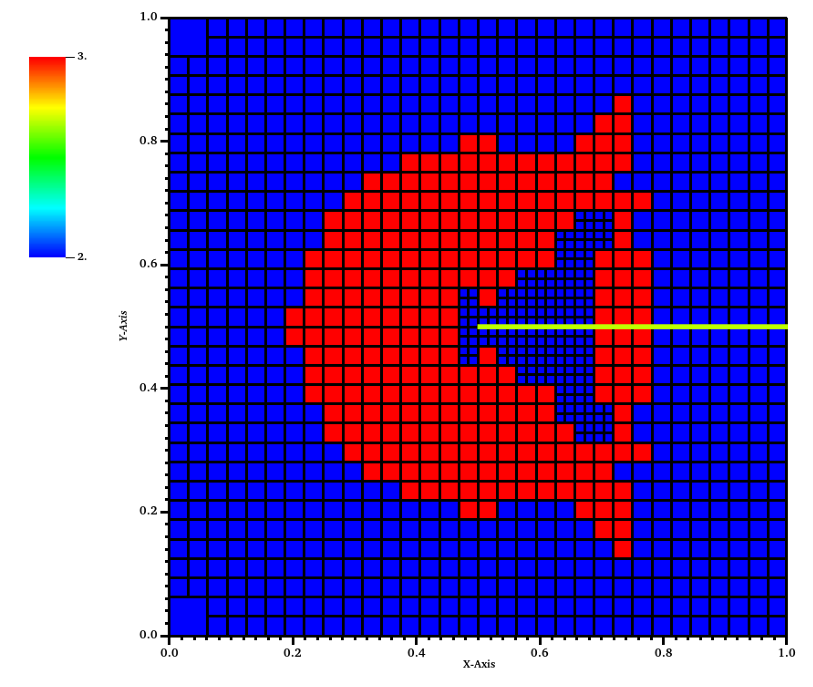}
        \caption{ $hp$-refinement-1}
        \label{fig:fig1}
    \end{subfigure}
    \hfill % Adds horizontal space between the two figures
    \begin{subfigure}[b]{0.48\textwidth}
        \includegraphics[width=\linewidth]{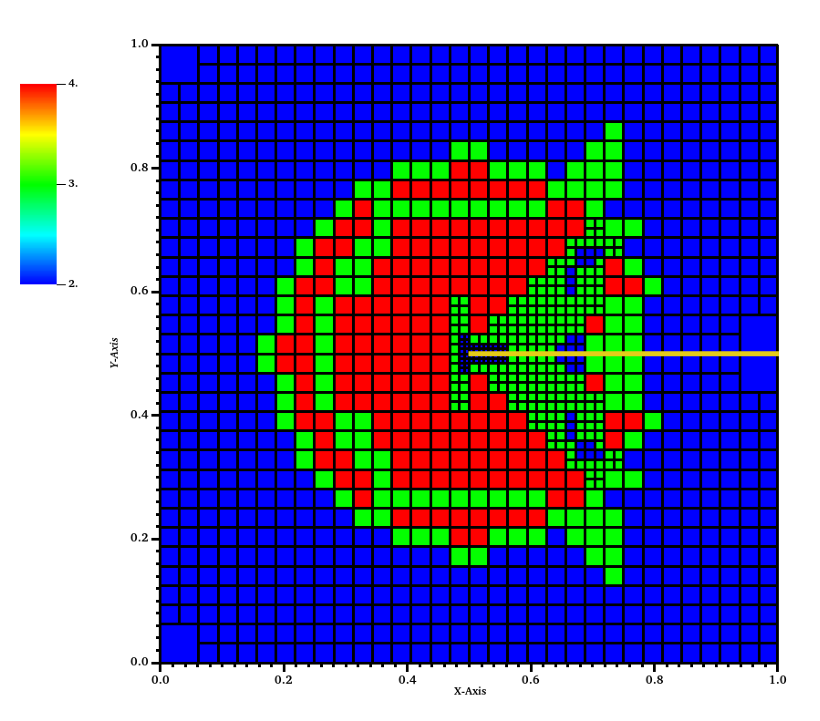}
        \caption{$hp$-refinement-2}
        \label{fig:fig2}
    \end{subfigure}

    \vspace{0.5cm} % Adds vertical space between the rows

    % --- Second Row ---
    \begin{subfigure}[b]{0.48\textwidth}
        \includegraphics[width=\linewidth]{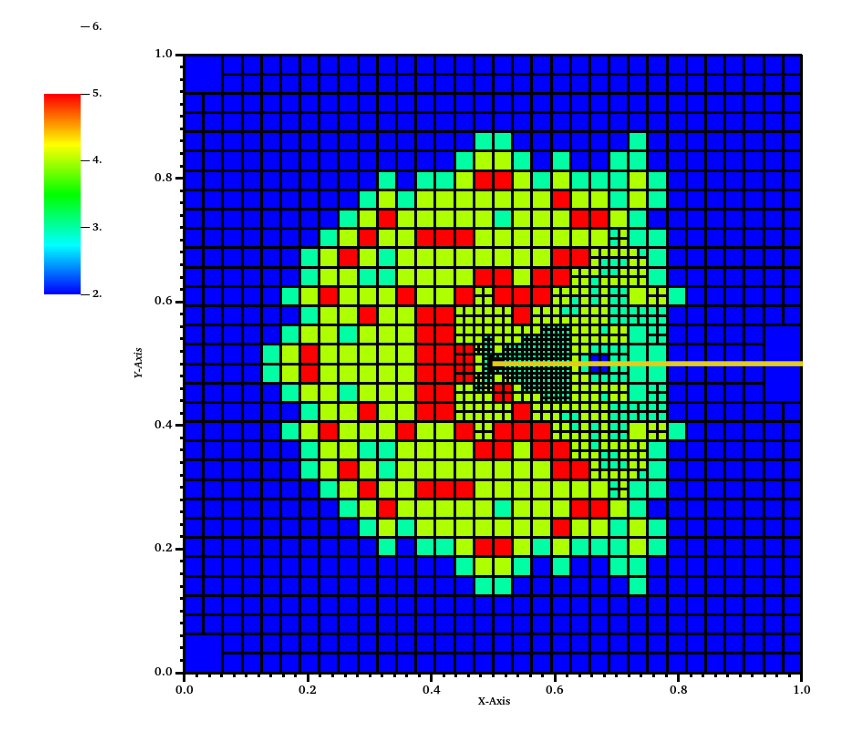}
        \caption{$hp$-refinement-3}
        \label{fig:fig3}
    \end{subfigure}
    \hfill % Adds horizontal space between the two figures
    \begin{subfigure}[b]{0.48\textwidth}
        \includegraphics[width=\linewidth]{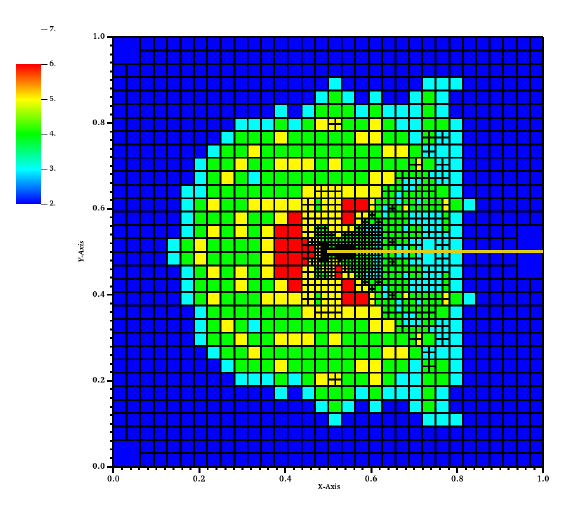}
        \caption{$hp$-refinement-4}
        \label{fig:fig4}
    \end{subfigure}

    \vspace{0.5cm} % Adds vertical space between the rows

    % --- Third Row ---
    \begin{subfigure}[b]{0.48\textwidth}
        \includegraphics[width=\linewidth]{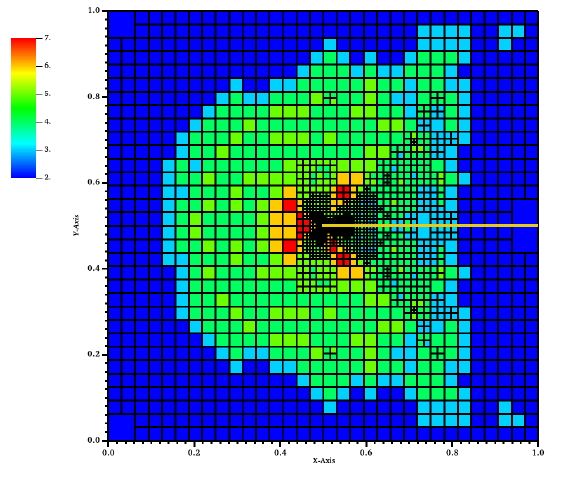}
        \caption{$hp$-refinement-5}
        \label{fig:fig5}
    \end{subfigure}
    \hfill % Adds horizontal space between the two figures
    \begin{subfigure}[b]{0.48\textwidth}
        \includegraphics[width=\linewidth]{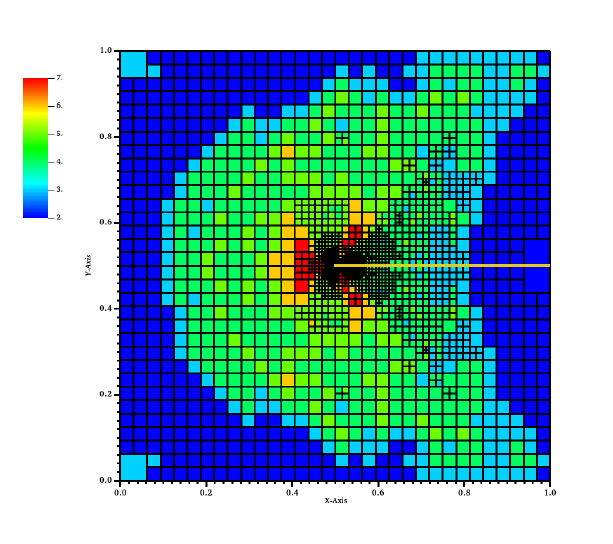}
        \caption{$hp$-refinement-6}
        \label{fig:fig6}
    \end{subfigure}

    % --- Overall Caption and Label ---
    \caption{The six $hp$-finements}
    \label{fig:six_figures_total}
\end{figure}
% --- End of the figure environment ---

\begin{figure}[H]
    \centering % Center the entire figure block

    % --- First Row ---
    \begin{subfigure}[b]{0.48\textwidth}
        \includegraphics[width=\linewidth]{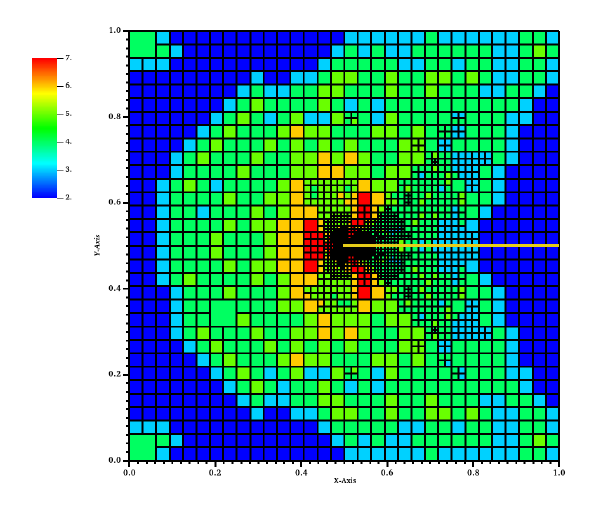}
        \caption{$hp$-refinement-7}
        \label{fig:fig1}
    \end{subfigure}
    \hfill % Adds horizontal space between the two figures
    \begin{subfigure}[b]{0.48\textwidth}
        \includegraphics[width=\linewidth]{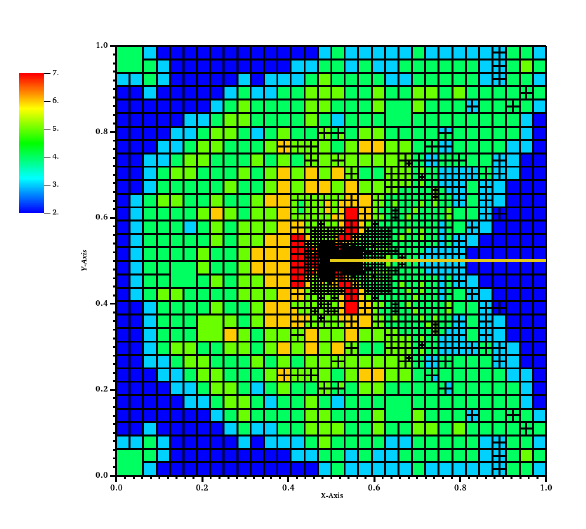}
        \caption{$hp$-refinement-8}
        \label{fig:fig2}
    \end{subfigure}

    \vspace{0.5cm} % Adds vertical space between the rows

    % --- Second Row ---
    \begin{subfigure}[b]{0.48\textwidth}
        \includegraphics[width=\linewidth]{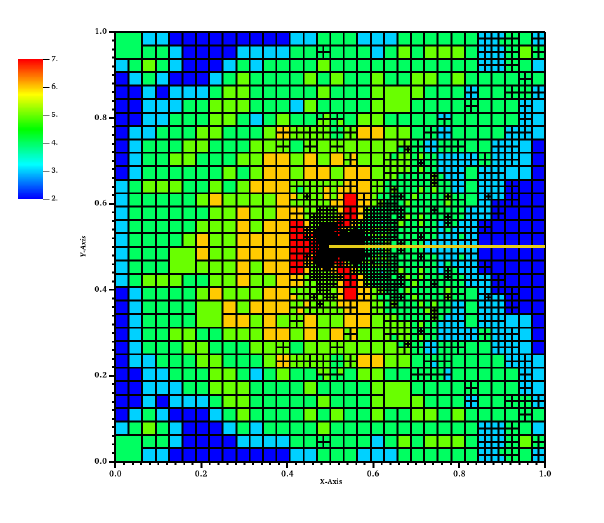}
        \caption{$hp$-refinement-9}
        \label{fig:fig3}
    \end{subfigure}
    \hfill % Adds horizontal space between the two figures
    \begin{subfigure}[b]{0.48\textwidth}
        \includegraphics[width=\linewidth]{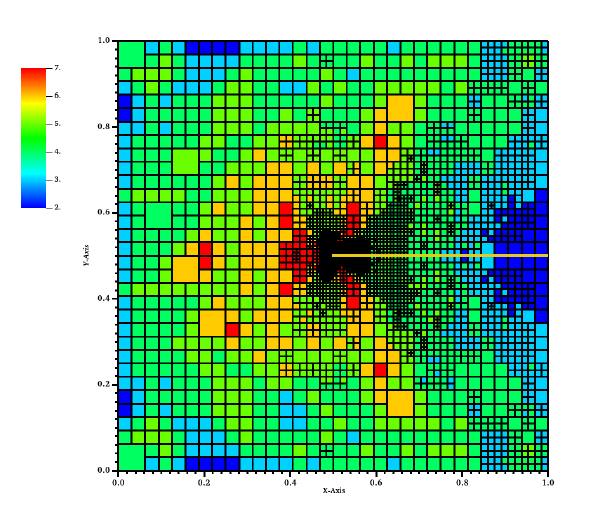}
        \caption{$hp$-refinement-10}
        \label{fig:fig4}
    \end{subfigure}

    % --- Overall Caption and Label ---
    \caption{The last 4 $hp$-refinements .}
    \label{fig:four_figures}
\end{figure}

The interplay of $h$-refinement and $p$-refinement across the first ten refinement stages is dynamically represented in Figures \ref{fig:six_figures_total} and \ref{fig:four_figures}. While h-refinement is notably extensive in the immediate vicinity of the crack-tip, p-refinement simultaneously demonstrates a clear and significant presence around the stress concentrator. This highlights how both refinement strategies contribute to accurately resolving the solution in different critical areas. Each distinct color within these figures serves to visually represent the polynomial order employed in approximating the solution to the BVP. This allows for a quick and intuitive understanding of how the approximation's complexity varies across different regions.

\begin{figure}[H]
    \centering 
    \begin{subfigure}[b]{0.48\textwidth}
        \includegraphics[width=\linewidth]{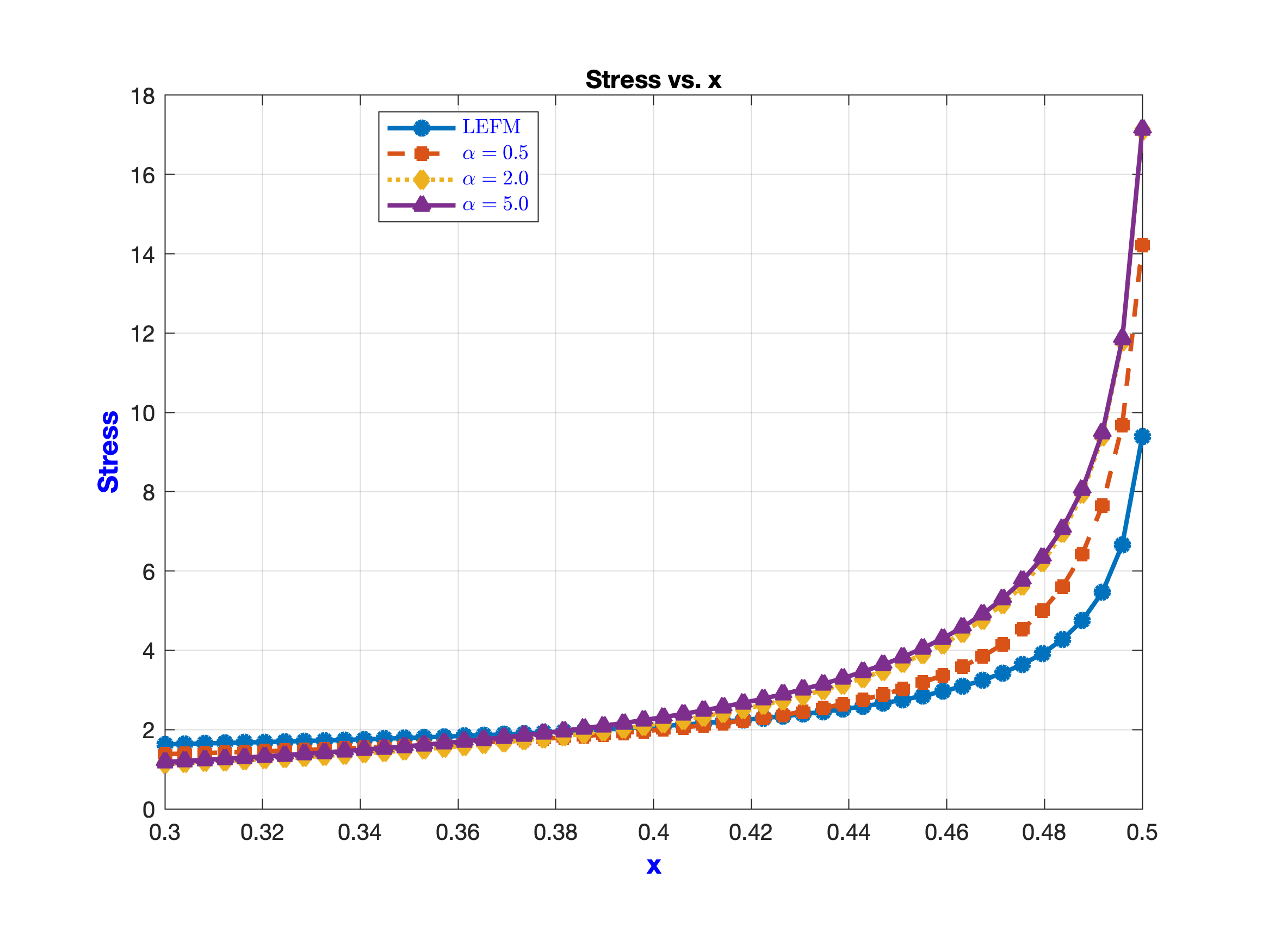}
        \caption{Effect of $\alpha$ on $\bfa{T}_{23}$ }
        \label{fig:fig1}
    \end{subfigure}
    \hfill 
    \begin{subfigure}[b]{0.48\textwidth}
        \includegraphics[width=\linewidth]{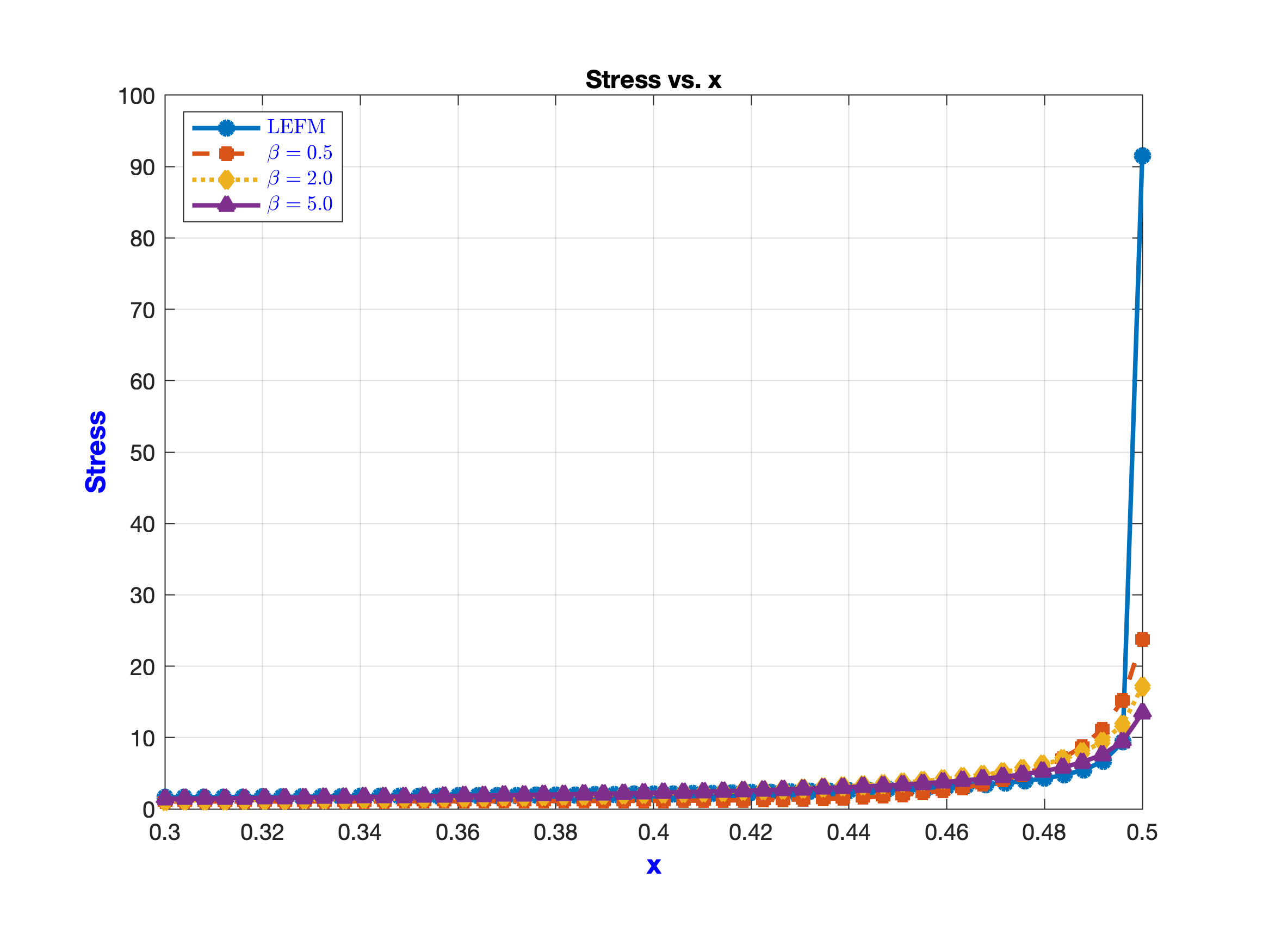}
        \caption{Effect of $\beta$ on $\bfa{T}_{23}$}
        \label{fig:fig2}
    \end{subfigure}
    \vspace{0.5cm} 
    \begin{subfigure}[b]{0.48\textwidth}
        \includegraphics[width=\linewidth]{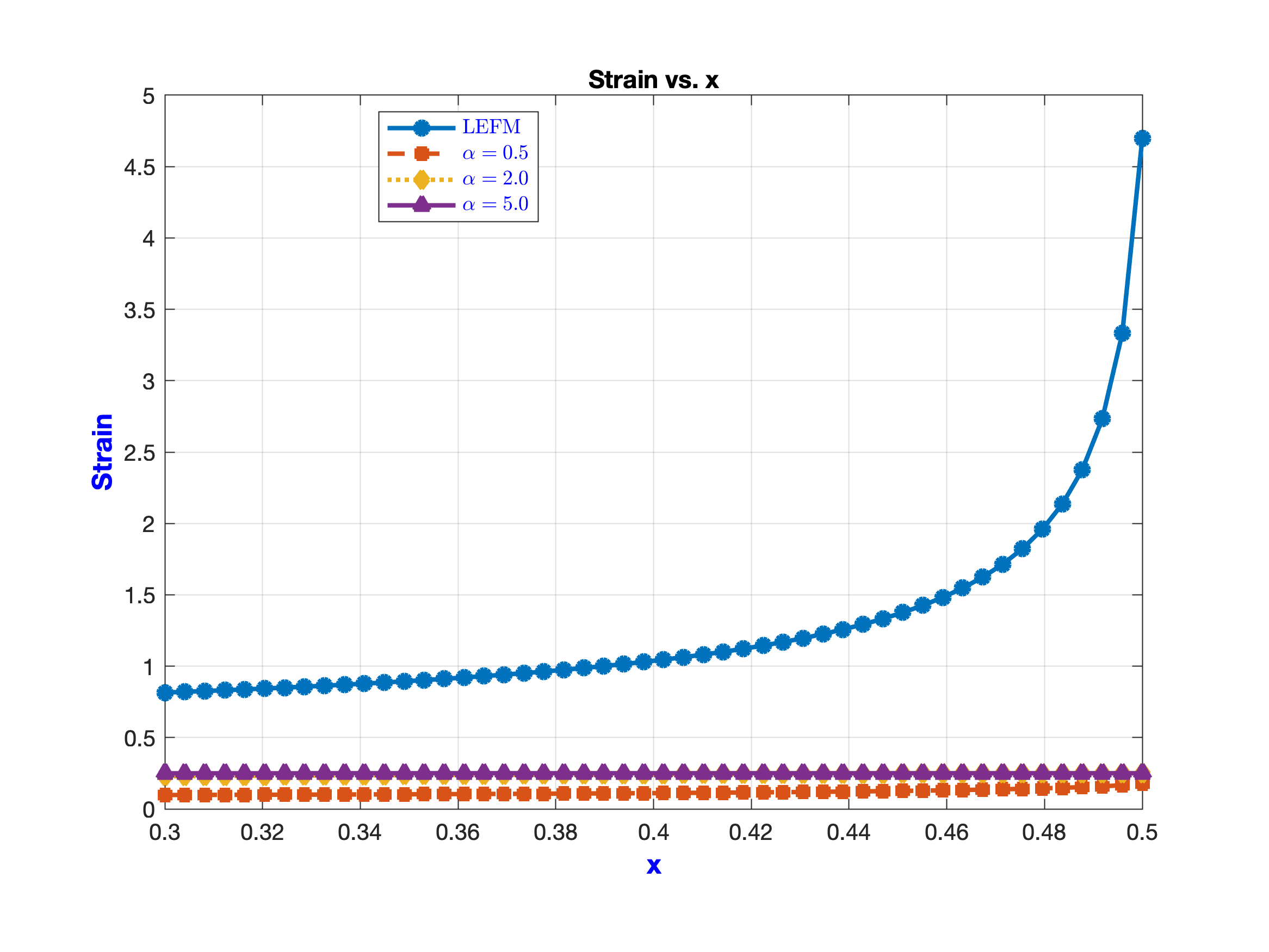}
        \caption{Effect of $\alpha$ on $\bfa{\epsilon}_{23}$}
        \label{fig:fig3}
    \end{subfigure}
    \hfill 
    \begin{subfigure}[b]{0.48\textwidth}
        \includegraphics[width=\linewidth]{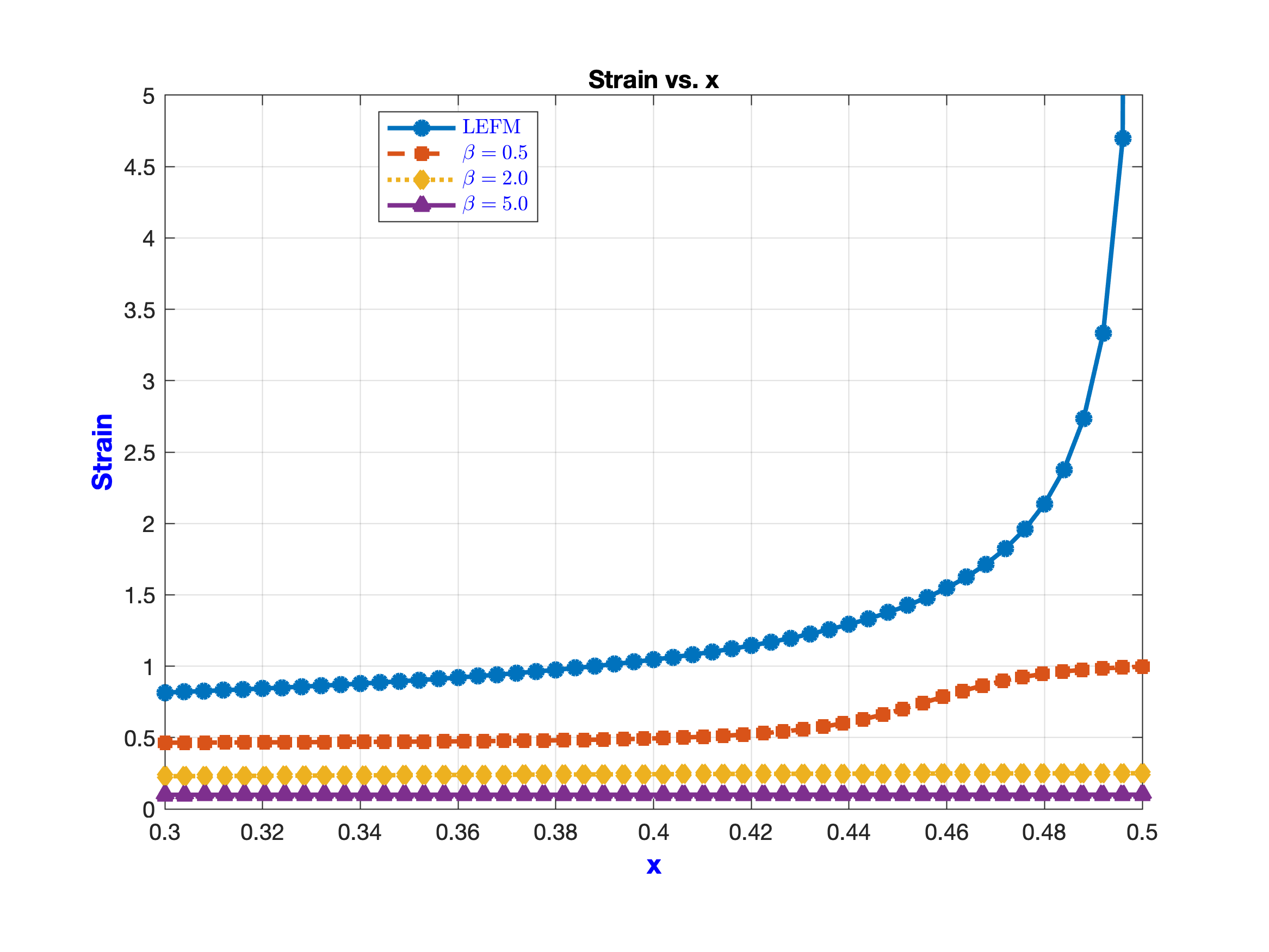}
        \caption{Effect of $\beta$ on $\bfa{\epsilon}_{23}$}
        \label{fig:fig4}
    \end{subfigure}
    \vspace{0.5cm} 
    \begin{subfigure}[b]{0.48\textwidth}
        \includegraphics[width=\linewidth]{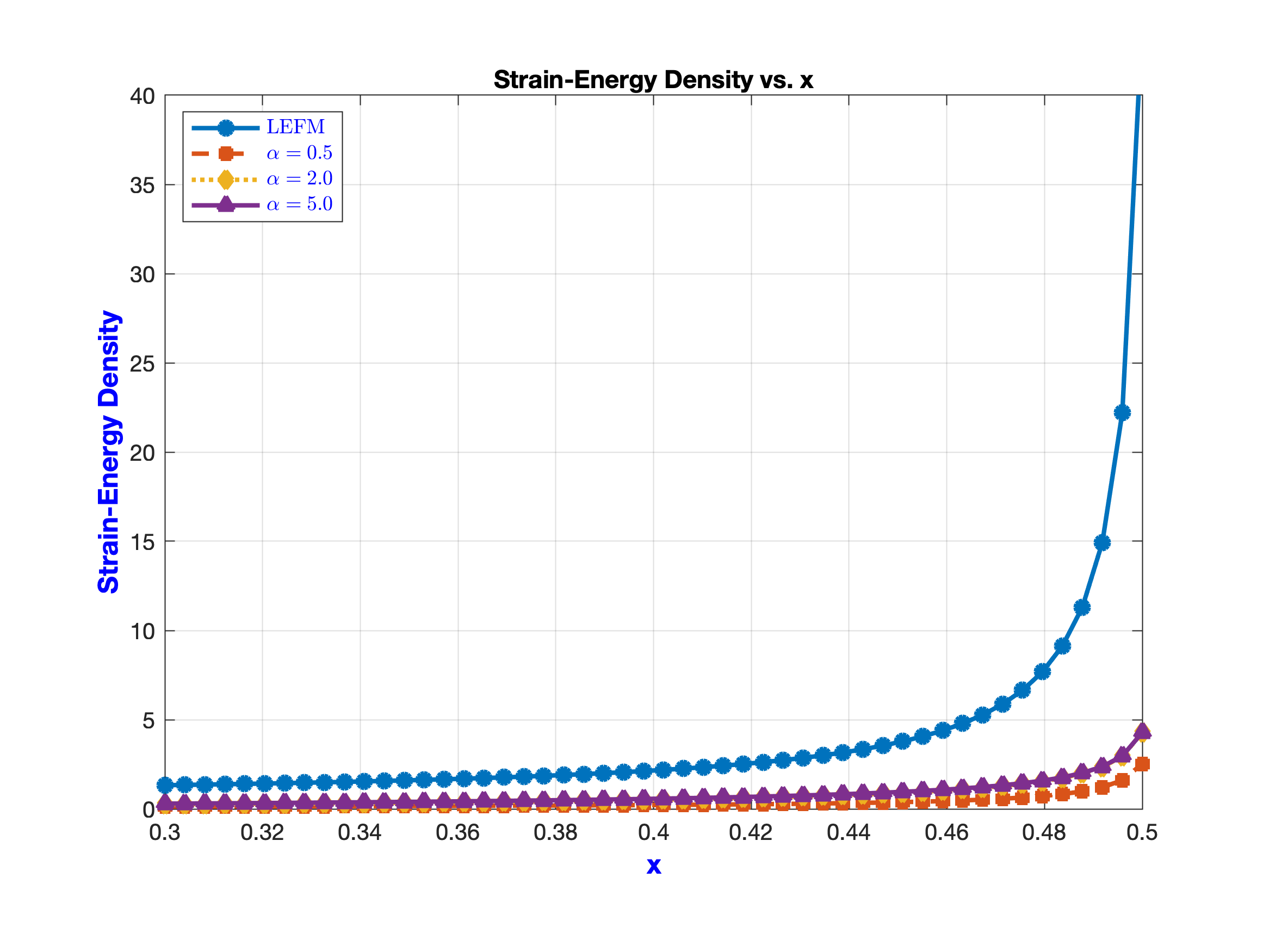}
        \caption{Effect of $\alpha$ on strain-energy density}
        \label{fig:fig5}
    \end{subfigure}
    \hfill 
    \begin{subfigure}[b]{0.48\textwidth}
        \includegraphics[width=\linewidth]{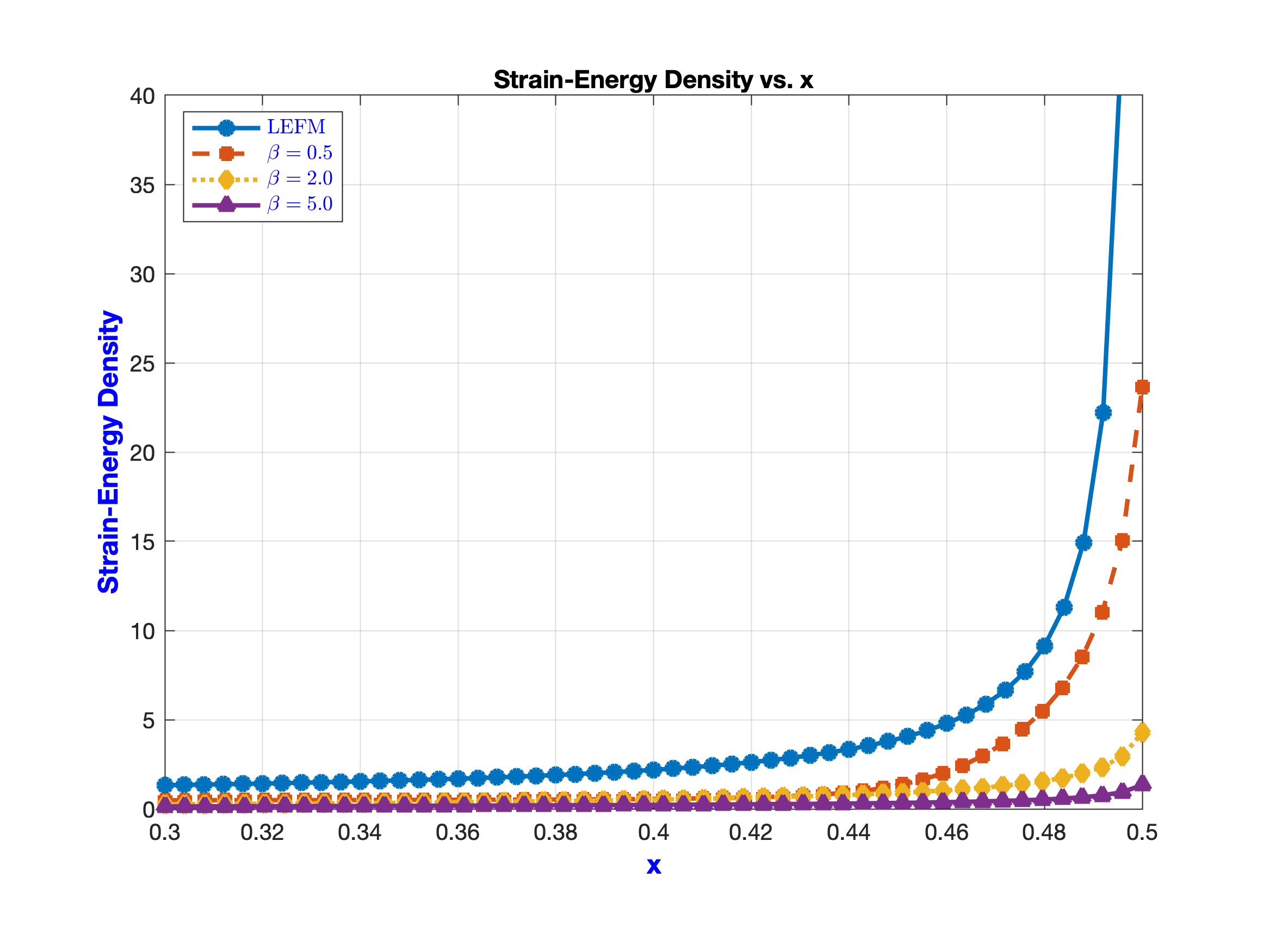}
        \caption{Effect of $\beta$ on strain-energy density}
        \label{fig:fig6}
    \end{subfigure}
\caption{This two-panel figure comprehensively illustrates the influence of two key modeling parameters, $\alpha$ and $\beta$, on the crack-tip fields. The left panel specifically focuses on the impact of $\alpha$ on critical crack-tip quantities, including stress, strain, and strain-energy density. Conversely, the right panel presents the effect of $\beta$ on these same crack-tip fields, providing a comparative analysis of each parameter's contribution.}
    \label{fig:crack_tip_fields}
\end{figure}

Figure~\ref{fig:crack_tip_fields} meticulously illustrates the impact of two critical modeling parameters, $\alpha$ and $\beta$, on the crack-tip fields. Specifically, it examines stress ($\bfa{T}_{23}$), strain ($\bfa{\epsilon}_{23}$), and strain-energy density ($\bfa{T} \cdot \bfa{\epsilon}$) along a reference line defined by $0.3 \leq x \leq 0.5$ and $y=0.5$, positioned directly ahead of the crack-tip. A clear observation is that stress concentration intensifies with increasing values of $\alpha$. Conversely, an opposite trend is observed for the parameter $\beta$, where increasing values appear to mitigate stress concentration. For both parameters, the crack-tip strain values deviate from the growth trend seen in the linear model (where $\beta=0$). Despite these variations, the strain-energy density consistently remains highest near the crack-tip when compared to values along the reference line. These findings are robust and align well with prior observations across various loading conditions, including anti-plane shear, plane-strain, in-plane shear, and even three-dimensional problems, as supported by existing literature \cite{mallikarjunaiah2015direct,manohar2024hp,vasilyeva2024generalized,ghosh2025finite,yoon2024finite,yoon2022MAM,gou2015modeling,ghosh2025computational,gou2023computational,gou2023finite,gou2025computational,mallikarjunaiah2025crack}. .

\section{Conclusion}\label{conclusions}

In this study, we successfully performed an $hp$-adaptive finite element simulation of a static anti-plane shear crack within a nonlinear strain-limiting elastic solid. This work demonstrates a robust approach to accurately capturing the complex stress, strain, and strain-energy density fields in the vicinity of a crack-tip, particularly within the framework of a constitutive model designed to address the inherent singularities predicted by classical linear elastic fracture mechanics. The constitutive relationship considered in this study is algebraically nonlinear, but the material body is geometrically linear. 

The application of $hp$-adaptive finite element methods proved instrumental in achieving the efficiency required for this challenging problem considered in this paper. It is now well-documented that $hp$-FEM achieves exponential rates of convergence by simultaneously refining the mesh ($h$-refinement) and increasing the polynomial order of the approximation ($p$-refinement). This dual refinement strategy is particularly effective for problems with localized features and strong singularities, such as crack-tips, where traditional $h$-refinement alone can be computationally prohibitive. By dynamically adapting both mesh size and polynomial degree, $hp$-FEM optimally concentrates computational effort where it is most needed, ensuring accurate resolution of the highly localized gradients without excessive global mesh density.

Furthermore, integrating the nonlinear strain-limiting elastic theory provided a physically sound framework to regularize the crack-tip fields. Unlike classical elasticity, which predicts infinite stresses and strains at a crack-tip, the strain-limiting theory introduces material parameters ($\alpha$ and $\beta$ in this context) that govern the material's response for any deformations, thereby yielding finite and more realistic values for stress, strain, and strain-energy density. Our simulations demonstrated how these parameters influence the crack-tip fields: increasing $\alpha$ was shown to intensify stress concentration, while $\beta$ exhibited an opposing effect. The observed deviations in crack-tip strain behavior compared to linear models and the consistent localization of strain-energy density near the crack-tip reinforce the importance of this nonlinear constitutive description. These findings are consistent with a broad range of previous observations across various loading and dimensional scenarios, further validating the model and the numerical approach.

In conclusion, this research highlights the synergistic power of $hp$-adaptive finite element methods and the strain-limiting elastic theory for accurately and efficiently analyzing fracture problems in nonlinear materials. The methodology presented here offers a robust tool for understanding complex material behavior near singularities, paving the way for more reliable predictions in engineering design and material science. Future work could extend this framework to quasi-static crack propagation \cite{manohar2025convergence,yoon2021quasi,lee2022finite,manohar2025adaptive}, fatigue analysis, or more complex three-field formulations \cite{fernando2025xi,fernando2025}, further leveraging the capabilities of $hp$-FEM for advanced nonlinear fracture mechanics.

\section*{Acknoledgement}
The work of SMM is supported by the National Science Foundation under Grant No.\ 2316905.

\bibliographystyle{plain}
\bibliography{references}

\end{document}